\documentclass[reqno]{amsart}
\usepackage{amssymb,latexsym}
\usepackage{amsmath}
\usepackage{amsthm}
\usepackage{graphicx}
\usepackage{hyperref}
\usepackage{titletoc}
\numberwithin{equation}{section}
\newtheorem{theorem}{Theorem}[section]
\newtheorem{proposition}[theorem]{Proposition}
\newtheorem{lemma}[theorem]{Lemma}

\theoremstyle{definition}

\def\XXint#1#2#3{{\setbox0=\hbox{$#1{#2#3}{\int}$}
     \vcenter{\hbox{$#2#3$}}\kern-.5\wd0}}

\begin{document}
\title[fractional Gierer-Meinhardt system]{Multi-bump ground states of the fractional Gierer-Meinhardt system in $\mathbb{R}$}

\author{ Juncheng Wei}
\address{ Juncheng ~Wei,~Department of Mathematics, University of British Columbia,
Vancouver, BC V6T 1Z2, Canada}
\email{jcwei@math.ubc.ca}
\author{ Wen Yang}
\address{ Wen ~Yang,~Department of Mathematics, University of British Columbia,
Vancouver, BC V6T 1Z2, Canada}
\email{wyang@math.ubc.ca}
\date{}\maketitle
\begin{abstract}
In this paper we study ground-states of the fractional Gierer-Meinhardt system on the line, namely the solutions of the problem
\begin{equation*}
\left\{\begin{array}{ll}
(-\Delta)^su+u-\frac{u^2}{v}=0,\quad &\mathrm{in}~\mathbb{R},\\
(-\Delta)^sv+\varepsilon^{2s}v-u^2=0,\quad &\mathrm{in}~\mathbb{R},\\
u,v>0,\quad u,v\rightarrow0~&\mathrm{as}~|x|\rightarrow+\infty.
\end{array}\right.
\end{equation*}
We prove that given any positive integer  $k,$ there exists a solution to this problem for $s\in[\frac12,1)$ exhibiting exactly $k$ bumps in its $u-$component, separated from each other at a distance $O(\varepsilon^{\frac{1-2s}{4s}})$ for $s\in(\frac12,1)$ and $O(|\log\varepsilon|^{\frac12})$ for $s=\frac12$ respectively, whenever $\varepsilon$ is sufficiently small. These bumps resemble the shape of the unique solution of
\begin{equation*}
(-\Delta)^sU+U-U^2=0,\quad 0<U(y)\rightarrow0~\mathrm{as}~|y|\rightarrow\infty.
\end{equation*}
\end{abstract}

\section{Introduction}
In this paper we consider the following fractional Gierer-Meinhardt system in $\mathbb{R}$
\begin{equation}
\label{1.1}
\left\{\begin{array}{lll}
(-\Delta)^su+u-\frac{u^2}{v}=0~&\mathrm{in}~\mathbb{R},\\
(-\Delta)^sv+\varepsilon^{2s}v-u^2=0~&\mathrm{in}~\mathbb{R},\\
\end{array}\right.
\end{equation}
where $(-\Delta)^s,0<s<1,$ denotes the  fractional Laplace operator. (For the definition, See Section 2 below.)

When $s=1$, this is the classical Gierer-Meinhardt system  proposed  by Gierer-Meinhardt  in \cite{gm} in 1972. More precisely they considered the following reaction-diffusion system as a model of biological pattern formation
\begin{align}
\label{1.2}
\left\{\begin{array}{lll}
a_t=d\Delta a-a+\frac{a^2}{h},~&\mathrm{in}~\Omega\times(0,t),\\
h_t=D\Delta h-h+a^2,~&\mathrm{in}~\Omega\times(0,t),\\
\partial_{\nu}a=\partial_{\nu}h=0,~&\mathrm{on}~\partial\Omega\times(0,t),
\end{array}\right.
\end{align}
where $d, D>0$ are diffusion rates, $\Omega \subset \mathbb{R}^n$ is a bounded domain and $\partial_{\nu}$ denotes the derivative in the outer normal direction. The Gierer-Meinhardt system was used in \cite{gm} to model head formation of {\em Hydra}, an animal of a few millimeters in length, made up of approximately 100,000 cells of about fifteen different types. It consists of a "head" region located at one end along its length. Typical experiments with {\em hydra} involve removing part of the "head" region transplanted area is sufficiently far from the (old) head. These observations led to the assumption of the existence of two chemical substances a {\em slowly} diffusing activator and a {\em rapidly} diffusing inhibitor, whose concentrations at the point $x\in\Omega$ and time $t>0$ are represented, respectively, by the quantities $a(x,t)$ and $h(x,t)$. Their diffusion rates, given by the positive constants $d$ and $D$ are then assumed to be that $d\ll D$. The Gierer-Meinhardt system falls within the framework of a theory proposed by Turing \cite{t} in 1952 as a mathematical model for the development of complex organisms from a single cell. He speculated that localized peaks in concentration of chemical substances, known as inducers or morphogens, could be responsible for a group of cells developing differently from the surrounding cells. Turing discovered through linear analysis that a large difference in relative size of diffusivities for activating and inhibiting substances carries instability of the homogeneous, constant steady state, thus leading to the presence of nontrivial, possibly stable stationary configurations. Activator-inhibitor systems have been used widely in the mathematical theory of biological pattern formation \cite{m1, m2}. Substantial research concerning this system has been generated in recent years. We refer the reader to the survey papers \cite{n}, \cite{wsurvey} and the book \cite{wwbook} for the overview of the subject.

\medskip

In the last twenty years there have been intensive research on the existence and stability  of steady states of the Gierer-Meinhardt system (\ref{1.2}) in a bounded domain. It is known that  there are multiple spikes solutions which may be stable. We refer to papers \cite{dkp}, \cite{iww}, \cite{w2}, \cite{wardwei}, \cite{ww2}, \cite{ww3} and the book \cite{wwbook} and the references therein. In the case of the domain being the whole space, after suitable rescaling the steady state problem of (\ref{1.2}) becomes
\begin{equation}
\label{1.7}
\left\{\begin{array}{ll}
\Delta u-u+\frac{u^2}{v}=0,~u>0, \ \mbox{in} \ \mathbb{R}^n\\
\Delta v-\varepsilon^2v+u^2=0,~v>0, \ \mbox{in} \ \mathbb{R}^n\\
u,v\rightarrow0~\mathrm{as}~|x|\rightarrow+\infty.
\end{array}\right.
\end{equation}
A solution to (\ref{1.7}) is called ground state. In the real line case the existence of single and multiple pulse solutions is proved independently by Doelman-Gardner-Kaper  in \cite{dkc} (via geometric dynamical system method) and by Chen-del Pino-Kowalczyk  in \cite{dgk}  (via PDE reduction method). Similar results have been obtained for the case $n=2$ by del Pino-Kowalczyk-Wei in \cite{dkw}. In higher dimensional case another type of solutions exist: solutions which are radially symmetric but have rings concentrations. We refer to Ni-Wei \cite{nw}, Kolokolnikov-Wei \cite{kw} and Kolokolnikov-Wei-Yang \cite{kwy}. In $\mathbb{R}^3$ there exists also axially symmetric solution with smoke ring concentrations. See Kolokolnikov-Ren \cite{kr}. The presence of steady configurations in the whole space appears driven by smallness of the {\em relative size} $\varepsilon^2=\frac{d}{D}$ of the diffusion rates of the activating and inhibiting substances. The present paper deals with the case  of equation (\ref{1.7}) in the nonlocal diffusion--fractional laplacian case (\ref{1.1}) for $s\in[\frac12,1)$. Next we briefly discuss the fractional laplacian and nonlocal diffusion.

In probability, we consider the random walk for L$\acute{e}$vy processes:
$$u_j^{n+1}=\sum_kP_{jk}u_k^n,$$
where $P_{jk}$ denotes the transition function which has a tail (i.e., power decay with the distance $|j-k|$). By taking the limit we get an operator $(-\Delta)^s$ as the infinitesimal generator of a L$\acute{e}$vy process: if $X_t$ is the isotropic $2s-$stable l$\acute{e}$vy process we have
$$(-\Delta)^su(x)=\lim_{h\rightarrow0^+}\frac{1}{h}\mathbb{E}[u(x)-u(x+X_h)].$$
When $s=1$ it corresponds to the Brownian motion.

Fractional diffusion equations have been used to model anomalously slow or fast scattering of particles in a variety of natural applications. A consideration of the problem of anomalous sub-diffusion with reactions in terms of continuous-time random walks (CTRWs) with sources and sinks leads to a fractional activator-inhibitor model with a fractional order temporal derivative operating on the spatial Laplacian. A similar type of system has also been proposed for diffusion with reactions on a fractal. The problem of anomalous super-diffusion with reactions has also been considered and in this case a fractional reaction-diffusion model has been proposed with the spatial Laplacian replaced by a spatial fractional differential operator. If the reaction time is not short compared with the diffusion time in sub-diffusive systems with reactions (for example, if many encounters between reactants are required before reactions proceed), then an alternate model has been proposed where the fractional order temporal derivative operates on both the spatial Laplacian and the reaction term. An important distinction between the two anomalous reaction-diffusion models becomes apparent when the concentration of species is spatially homogeneous, this latter model does not reduce to the classical macroscopic rate equations except when the diffusion is also non-anomalous. For the more background on fractional reaction-diffusion system, we refer the readers to \cite{GMV, hl, mcf} and references therein. We shall mention that weakly nonlinear analysis has been done to constant equilibriums for  Turing's system by Henry-Langlands-Wearne \cite{hl} and Golovin-Matkowsky-Volpert \cite{GMV}. In \cite{Nec} the author studied a slightly different Gierer-Meinhardt system with fractional diffusion
\begin{align}
\label{1.2m}
\left\{\begin{array}{lll}
a_t=-d (- \Delta)^s a-a+\frac{a^2}{h},~&\mathrm{in}~ (-1, 1)\times(0,t),\\
h_t=D\Delta h-h+a^2,~&\mathrm{in}~ (-1, 1)\times(0,t),\\
a_x (\pm 1, t) = h_x (\pm 1, t)=0
\end{array}\right.
\end{align}
where $ (-\Delta)^s $ denotes the spectral fractional Laplacian.

\medskip

 In this paper we shall consider the existence of nonlinear patterns for the classical Gierer-Meinhardt system in the real line (\ref{1.1}) with fractioal diffusion. A similar notable feature of problem (\ref{1.1}) (as the classical case) is, it will be shown in this work, the presence of a large number of solutions (modulo translations) as the parameter $\varepsilon$ gets smaller. More precisely, given any positive integer  $k$ we find a number $\varepsilon_k$ such that if $0<\varepsilon<\varepsilon_k$ then there exists a solution exhibiting exactly $k$ {\em bumps} in the activator. Besides, after appropriate re-scaling of $u$, these bumps are approaching a universal profile and are separately from each other $\varepsilon^{\frac{1-2s}{4s}}$ and $(-\log\varepsilon)^{\frac12}$ for the case $s\in(\frac12,1)$ and $s=\frac12$ respectively. We remark that this phenomenon is intrinsic to the full system, for only one ground state of the following equation modulo translations exists (\cite{fl})
\begin{equation}
\label{1.8}
(-\Delta)^s U+U-U^2=0~\mathrm{in}~\mathbb{R},\quad 0<U(y)\rightarrow0~\mathrm{as}~|y|\rightarrow\infty.
\end{equation}

\medskip

Before stating our main results on the existence of such solutions, we need the following preparations. In the sequel by $U(x)$ we denote the unique radially symmetric solution of (\ref{1.8}). (For the existence and uniqueness, we refer to Frank-Lenzmann \cite{fl}.)  In order to describe the position of bumps,  we set
\begin{align}
\label{1.9}
\Xi_s(q_1,q_2,\cdots,q_m)=~&
\sum_{i=1}^m\Big(\alpha_sU(2q_i)+\beta_s\varepsilon^{2s-1}|2q_i|^{2s-1}\Big)\nonumber\\
&+\sum_{i\neq j}\Big(\alpha_sU(q_i-q_j)+\beta_s\varepsilon^{2s-1}|q_i-q_j|^{2s-1}\Big)\nonumber\\
&+\sum_{i\neq j}\Big(\alpha_sU(q_i+q_j)+\beta_s\varepsilon^{2s-1}|q_i+q_j|^{2s-1}\Big)
\end{align}
for $s\in(\frac12,1)$ and
\begin{align}
\label{1.10}
\Xi_{\frac12}(q_1,q_2,\cdots,q_m)=~&
\sum_{i=1}^m\Big(\alpha_{\frac12}U(2q_i)
+\beta_{\frac12}\frac{1}{\log\frac{1}{\varepsilon}}\log|2q_i|\Big)\nonumber\\
&+\sum_{i\neq j}\Big(\alpha_{\frac12}U(q_i-q_j)
+\beta_{\frac12}\frac{1}{\log\frac{1}{\varepsilon}}\log|q_i-q_j|\Big)\nonumber\\
&+\sum_{i\neq j}\Big(\alpha_{\frac12}U(q_i+q_j)
+\beta_{\frac12}\frac{1}{\log\frac{1}{\varepsilon}}\log|q_i+q_j|\Big)
\end{align}
for $s=\frac12.$

\medskip

In Section 8, we shall prove the function $\Xi_s,~s\in[\frac12,1)$ admits global minimal point in the interior of the following set
\begin{align}
\label{1.11}
Q_{s,\eta}=\Big\{(q_1,q_2,\cdots,q_m)\mid
\frac{1}{\eta}\varepsilon^{\frac{1-2s}{4s}}>q_i>\eta\varepsilon^{\frac{1-2s}{4s}},
|q_i-q_j|>\eta\varepsilon^{\frac{1-2s}{4s}}\Big\}
\end{align}
for $s\in(\frac12,1)$ and
\begin{align}
\label{1.12}
Q_{\frac12,\eta}=\Big\{(q_1,q_2,\cdots,q_m)\mid
\frac{1}{\eta}(\log\frac{1}{\varepsilon})^{\frac12}>q_i>\eta(\log\frac{1}{\varepsilon})^{\frac12},
|q_i-q_j|>\eta(\log\frac{1}{\varepsilon})^{\frac12}\Big\}
\end{align}
for $s=\frac12$ respectively. Here $\eta$ denotes a small positive number and with the constants $\alpha_s,\beta_s$ being given later.

We denote one of the global minimal points (if there are more) of $\Xi_s$ in $Q_{s,\eta}$ by
$${\bf{q}}_s=(q_{s,1},q_{s,2},\cdots,q_{s,m}).$$

\noindent{\bf{Remark}}: Since the function $\Xi_s(q_1,q_2,\cdots,q_m)$ is analytic in $Q_{s,\eta}$, therefore, all the global minimal points are discrete.\\

Let us set
\begin{equation}
\label{1.13}
\tau_{\varepsilon}=
\left\{\begin{array}{lll}
\Big(\frac{k}{2s\sin(\frac{\pi}{2s})}\varepsilon^{1-2s}\int_{\mathbb{R}^1}U^2(y)\mathrm{d}y\Big)^{-1},&s\in(\frac12,1),\\
\Big(\frac{k}{\pi}\log\frac{1}{\varepsilon}\int_{\mathbb{R}^1}U^2(y)\mathrm{d}y\Big)^{-1},&s=\frac12.
\end{array}\right.
\end{equation}

\vspace{0.5cm}
Our main result is the following.\\

\begin{theorem}
\label{th1.1}
Let $k=2m,~m\geq1$ be a fixed positive integer. There exists $\varepsilon_k>0$ such that for each $0<\varepsilon<\varepsilon_k,$ problem (\ref{1.1}) admits a solution $(u,v)$ with the following property:
\begin{equation*}
\lim_{\varepsilon\rightarrow0}\Big|\tau_{\varepsilon}^{-1}u_{\varepsilon}(x)-
\sum_{i=1}^m\big(U(x-q_{s,i})+U(x+q_{s,i})\big)\Big|=0,
\end{equation*}
uniformly in $x\in\mathbb{R}^1$, while for the second component $v$, we have
\begin{equation*}
\lim_{\varepsilon\rightarrow0}\Big(\Big|\tau_{\varepsilon}^{-1}v_{\varepsilon}(q_{s,i}+x)-1\Big|+
\Big|\tau_{\varepsilon}^{-1}v_{\varepsilon}(q_{s,i}-x)-1\Big|\Big)=0
\end{equation*}
for any $i$ uniformly in compact sets in $x$, $i=1,2,\cdots,m$. A similar result holds when $ k=2m+1, m\geq 0$.
\end{theorem}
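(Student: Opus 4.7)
The plan is to execute a finite-dimensional Lyapunov--Schmidt reduction around the symmetric ansatz
\[
W_{\mathbf{q}}(x) \;=\; \sum_{i=1}^{m}\bigl(U(x-q_i)+U(x+q_i)\bigr),
\]
and then pick the position vector $\mathbf{q}=(q_1,\dots,q_m)$ by minimizing a reduced energy that will coincide with $\Xi_s$ at leading order. First, because the $v$-equation is linear in $v$, I would invert it by convolution with the Green's function $G_\varepsilon$ of $(-\Delta)^s+\varepsilon^{2s}$, giving $v=G_\varepsilon\ast u^2$, and substitute this into the activator equation to obtain a single nonlocal, nonlinear equation
\[
\mathcal{N}_\varepsilon[u]\;:=\;(-\Delta)^s u+u-\frac{u^2}{G_\varepsilon\ast u^2}\;=\;0.
\]
With the scaling $u=\tau_\varepsilon W_{\mathbf{q}}+\phi$ and the choice of $\tau_\varepsilon$ in \eqref{1.13}, a direct computation using the expansions $G_\varepsilon(x)\sim c_s|x|^{2s-1}$ (resp.\ $c_{1/2}\log|x|$ for $s=\tfrac12$) away from the pole should give $\tau_\varepsilon^{-1}\,G_\varepsilon\ast W_{\mathbf{q}}^2(q_{s,i}+y)=1+o(1)$ uniformly on compacts. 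This makes $\tau_\varepsilon W_{\mathbf{q}}$ an approximate solution: each bump locally satisfies $(-\Delta)^s U+U-U^2=0$, and the error $\mathcal{N}_\varepsilon[\tau_\varepsilon W_{\mathbf{q}}]$ is small in a weighted $L^\infty$ space adapted to the decay $|x-q_i|^{-1-2s}$.

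Second, I would analyze the linearized operator
\[
L_\varepsilon \phi \;=\; (-\Delta)^s\phi+\phi-\frac{2\tau_\varepsilon W_{\mathbf{q}}}{G_\varepsilon\ast(\tau_\varepsilon W_{\mathbf{q}})^2}\phi+\frac{(\tau_\varepsilon W_{\mathbf{q}})^2}{(G_\varepsilon\ast(\tau_\varepsilon W_{\mathbf{q}})^2)^2}\bigl(G_\varepsilon\ast(2\tau_\varepsilon W_{\mathbf{q}}\phi)\bigr).
\]
Since each bump is modeled on $U$, at leading order $L_\varepsilon$ looks like a direct sum of copies of the scalar operator $L_0=(-\Delta)^s+1-2U$, whose only kernel direction is $U'$ by the nondegeneracy result of Frank--Lenzmann. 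By working orthogonally to the approximate kernel, which is spanned by the $2m$ translations $\partial_x U(\cdot-q_i)$ and $\partial_x U(\cdot+q_i)$, and using Lagrange multipliers $c_i$, the standard recipe yields an invertible projected linear problem, with bounds independent of $\varepsilon$ in a suitable weighted norm. A contraction-mapping argument in this weighted space then produces a unique small $\phi=\phi_{\mathbf{q}}$ for every admissible $\mathbf{q}\in Q_{s,\eta}$, depending smoothly on $\mathbf{q}$.

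Third, I would reduce to the finite-dimensional problem of finding $\mathbf{q}$ making all Lagrange multipliers $c_i$ vanish, equivalently finding a critical point of the reduced functional $J_\varepsilon(\mathbf{q})$ obtained by evaluating the natural energy at $\tau_\varepsilon W_{\mathbf{q}}+\phi_{\mathbf{q}}$. Expanding, each self-interaction $\int U(x-q_i)^2\, G_\varepsilon\ast U(\cdot-q_i)^2$ contributes the leading constants, while cross-terms $\int U(x-q_i)\,U(x-q_j)$ and $\int G_\varepsilon(q_i-q_j)\,U^2$ generate the sums $\alpha_sU(q_i\pm q_j)+\beta_s\varepsilon^{2s-1}|q_i\pm q_j|^{2s-1}$ (or their $s=\tfrac12$ logarithmic analogue), and the reflected terms arising from the even symmetry of the ansatz produce the $2q_i$ summands. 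This identifies $J_\varepsilon=c_0+c_1\,\Xi_s(\mathbf{q})+o(\text{leading})$ uniformly on $Q_{s,\eta}$ together with its gradient. Since the minimum $\mathbf{q}_s$ promised by Section~8 lies in the interior of $Q_{s,\eta}$ and is isolated (by analyticity), a local degree/minimization argument applied to $\nabla J_\varepsilon$ yields a critical point near $\mathbf{q}_s$, hence a genuine solution. The case $k=2m+1$ is handled identically by adding one extra bump centered at the origin.

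The main obstacle I anticipate is the delicate handling of the Green's function $G_\varepsilon$ of $(-\Delta)^s+\varepsilon^{2s}$: one must rigorously separate the pole behavior $|x|^{-1+2s}$ at short range from the power (or logarithmic) intermediate asymptotic $|x|^{2s-1}$ that drives the interaction energies, and merge these two regimes with the exponential cutoff at scale $\varepsilon^{-1}$, all while tracking remainders small enough to solve the reduced problem. This is especially sensitive in the borderline case $s=\tfrac12$, where the $\log(1/\varepsilon)$ prefactor in $\tau_\varepsilon$ and in the interaction forces every step of the expansion to be carried out with logarithmic precision. Once these asymptotics are established, the remainder of the scheme, the contraction for $\phi$, the expansion of $J_\varepsilon$, and the matching with $\Xi_s$, follows the pattern successfully used in the local case by Chen--del Pino--Kowalczyk.
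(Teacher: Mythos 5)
Your overall scheme is the one the paper uses: invert the inhibitor equation to get the scalar nonlocal problem $(-\Delta)^s u+u-u^2/T(u^2)=0$, take the symmetric multi-bump ansatz, linearize, solve the projected problem via contraction in a weighted $L^\infty$ space, and then hunt for $\mathbf{q}$ making the projection coefficients vanish near the minimizer of $\Xi_s$. The linearized operator you write down, and the identification of the approximate kernel with the $k=2m$ translation modes, match what the paper proves in Section~4 (via a compactness/Fredholm argument). The interaction bookkeeping you describe, self-interactions giving the $\tau_\varepsilon$ normalization and cross-terms $\int U_iU_j$ plus $\int G_\varepsilon(q_i-q_j)U^2$ generating the $\alpha_s U(q_i\pm q_j)+\beta_s\varepsilon^{2s-1}|q_i\pm q_j|^{2s-1}$ structure, is exactly the content of Lemmas~5.1--6.2.

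There is, however, one genuine gap: step three of your outline rests on ``evaluating the natural energy at $\tau_\varepsilon W_{\mathbf{q}}+\phi_{\mathbf{q}}$'' to produce a reduced functional $J_\varepsilon$ with $\nabla J_\varepsilon$ proportional to the Lagrange multipliers. The Gierer--Meinhardt nonlinearity $u^2/T(u^2)$ is \emph{not} a gradient, so there is no such natural energy; the equivalence ``$c_i=0$ iff $\nabla J_\varepsilon(\mathbf{q})=0$'' that you invoke has no foundation here, and an attempt to expand $J_\varepsilon$ would stall at the first step. The paper sidesteps this by working directly with the numbers $c_{s,j}$: Lemma~6.2 and Proposition~7.1 show, by computing $\int S(W)\,Z_j$ and bounding $\langle N(\phi),Z_j\rangle$ and $\langle\phi,L^*Z_j\rangle$, that $c_{s,j}=\partial_{q_j}\widehat{\Xi}_s(\mathbf{q})+e_{s,j}$ with $e_{s,j}$ strictly smaller than the leading scale ($\varepsilon^{s+\frac12-\frac1{2s}}$, resp.\ $(\log\frac1\varepsilon)^{-3/2}$). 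Section~8 then establishes an interior minimum of $\Xi_s$ in $Q_{s,\eta}$, takes a neighborhood $O_{\mathbf{q}_s}$ on which $|\mathbf{b}_s|=|\nabla\widehat{\Xi}_s|$ is bounded below by that same scale, and runs a Brouwer-degree homotopy from $\mathbf{b}_s$ to $\mathbf{c}_s$ to conclude $\deg(\mathbf{c}_s,O_{\mathbf{q}_s},0)\neq 0$. Your closing ``degree/minimization argument'' is thus the right endpoint, but it must be applied to the map $\mathbf{c}_s$ itself rather than to the gradient of a nonexistent functional. A secondary, smaller point: for $n=1$ and $s\in[\tfrac12,1)$ the Green's function $G$ of $(-\Delta)^s+1$ is \emph{bounded} at the origin (see Lemma~2.2: $G(x)=\mathfrak{a}_0+\mathfrak{a}_1|x|^{2s-1}+\dots$, or $-\frac1\pi\log|x|$ for $s=\tfrac12$), so there is no short-range pole to separate from the $|x|^{2s-1}$ term; the real delicacy is matching the $\varepsilon$-independent short-range expansion of $G$ with the intermediate-range expansion of $G_\varepsilon$ on the scale $|x|\sim\varepsilon^{(1-2s)/(4s)}$ (or $(\log\frac1\varepsilon)^{1/2}$), and tracking that the algebraic tail of $U$ forces \emph{all} pairwise interactions, not just nearest-neighbor ones, to enter at leading order in the reduced vector field.
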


We will only give the details of constructing solutions exhibiting an even number of bumps in the line. The case of odd number of bumps can be treated in a similar manner, and we will discuss the necessary changes in section 8.

\medskip

The method employed in the proof of Theorem \ref{th1.1} consists of a Lyapunov-Schmidt type reduction. Fixing $m$ points which from $Q_{s,\eta}$ with $\eta$ being determined later, an auxiliary problem is solved uniquely, and solutions satisfying the required conditions will be those precisely satisfying a nonlinear system of equations of the form
\begin{equation*}
c_{s,i}(q_1,q_2,\cdots,q_m)=0,~i=1,2,\cdots,m,
\end{equation*}
where for such a class of points the function $c_{s,i}$ satisfy
\begin{equation}
\label{1.14}
c_{s,i}(q_1,q_2,\cdots,q_m)=\frac{\partial}{\partial q_i}\Big[\frac12F_s(|2q_i|)+\sum_{j\neq i}\big(F_s(|q_i-q_j|)
+F_s(|q_i+q_j|)\big)\Big]+\sigma_{s,i},
\end{equation}
where
\begin{align*}
F_s(r)=\left\{\begin{array}{ll}
\alpha_sU(r)+\beta_s\varepsilon^{2s-1}r^{2s-1},~&s\in(\frac12,1),\\
\alpha_{\frac12}U(r)+\beta_{\frac12}\frac{\log r}{\log\frac{1}{\varepsilon}},~&s=\frac12
\end{array}\right.
\end{align*}
and
\begin{align*}
\sigma_{s,i}=\left\{\begin{array}{ll}
o(1)\varepsilon^{s+\frac12-\frac{1}{2s}},~&s\in(\frac12,1),\\
o(1)(-\log\varepsilon)^{-\frac32},~&s=\frac12.
\end{array}\right.
\end{align*}

We can easily find that solutions of the problem $c_{s,i}=0$ are closely related to critical points of the functional defined in (\ref{1.9}) and (\ref{1.10}) for $s\in(\frac12,1)$ and $s=\frac12$ respectively. In the classical case, since the profile $U$ of each bump is exponential decay, in the leading term of $c_{i},$ we only need to consider the neighbor points of $q_i.$ However, in the fractional case,
the profile $U$ has algebraic decay.  As a result, all the points {\em interact with each other strongly} and we have to consider all the interactions in dealing with $c_{s,i}.$ This  is the main difficulty and new feature  in considering (\ref{1.1}).

\medskip

It is clear that the results of Theorem  \ref{th1.1} can be extended without any difficulty to Gierer-Meinhardt system  in which activator and inhibitor have {\em different} diffusion characters
\begin{equation}
\label{1.16}
\left\{\begin{array}{lll}
(-\Delta)^{s_1} u+u-\frac{u^2}{v}=0~&\mathrm{in}~\mathbb{R},\\
(-\Delta)^{s_2} v+\varepsilon^{2s_2}v-u^2=0~&\mathrm{in}~\mathbb{R},\\
\end{array}\right.
\end{equation}
where $ s_1 \in (0, 1]$ and $ s_2 \in [\frac{1}{2}, 1]$. It is also possible to generalize to Gierer-Meinhardt system with more general nonlinearity. We omit the details.

\medskip

The rest of the paper will be devoted to the proof of Theorem \ref{th1.1}. In section 2 we study the fractional Laplacian operator and the behavior of the Green function for $(-\Delta)^s+I$. In section 3 we set up the scheme of proof, in particular we explain why the constant $\tau_{\varepsilon}$ is the right scaling factor to get the desired multi-bump expansion. The program outlined there is carried over the following sections.

\vspace{1cm}
\section{Preliminaries}
In this section, we provide some elementary properties of the operators $(-\Delta)^s+I$. Let $0<s<1$. Various definitions of the fractional Laplacian $(-\Delta)^s\phi$ of a function $\phi$ defined in $\mathbb{R}^n$ are available, depending on its regularity and growth properties.

For $\phi\in H^{2s}(\mathbb{R}^n),$ the standard definition is given via Fourier transform $~\widehat{}~$ . $(-\Delta)^s\phi\in L^2(\mathbb{R}^n)$ is defined by the formula
\begin{equation}
\label{2.1}
|\xi|^{2s}\hat{\phi}(\xi)=\widehat{(-\Delta)^s\phi}.
\end{equation}
When $\phi$ is assumed in addition sufficiently regular, we obtain the direct representation
\begin{equation}
\label{2.2}
(-\Delta)^s\phi(x)=d_{s,n}\int_{\mathbb{R}^n}\frac{\phi(x)-\phi(y)}{|x-y|^{n+2s}}\mathrm{d}y
\end{equation}
for a suitable constant $d_{s,n}$ and the integral is understood in a principal value sense. This integral makes sense directly when $s<\frac12$ and $\phi\in C^{0,\alpha}(\mathbb{R}^n)$ with $\alpha>2s,$ or if $\phi\in C^{1,\alpha}(\mathbb{R}^n)$, $1+\alpha>2s.$ In the latter case, we can desingularize the integral and represent it in the form
\begin{align*}
(-\Delta)^s\phi(x)=d_{s,n}\int_{\mathbb{R}^n}\frac{\phi(x)-\phi(y)-\nabla\phi(x)(x-y)}{|x-y|^{n+2s}}\mathrm{d}y.
\end{align*}
Another useful (local) representation, found by Caffarelli and Silverstre \cite{cs}, is via the following boundary value problem in the half space $\mathbb{R}_+^{n+1}=\{(x,y)\mid x\in\mathbb{R}^n,y>0\}:$
\begin{equation*}
\left\{\begin{array}{ll}
\nabla\cdot(y^{1-2s}\nabla\tilde{\phi})=0~&\mathrm{in}~\mathbb{R}_+^{n+1},\\
\tilde{\phi}(x,0)=\phi(x)&\mathrm{on}~\mathbb{R}^n.
\end{array}\right.
\end{equation*}
Here $\tilde{\phi}$ is the $s-$harmonic extension of $\phi,$ explicitly given as a convolution integral with the $s-$Poisson kernel $p_s(x,y),$
\begin{align*}
\tilde{\phi}(x,y)=\int_{\mathbb{R}^n}p_s(x-z,y)\phi(z)\mathrm{d}z,
\end{align*}
where
\begin{align*}
p_s(x,y)=C_{n,s}\frac{y^{4s-1}}{(|x|^2+|y|^2)^{\frac{n-1+4s}{2}}}
\end{align*}
and $C_{n,s}$ achieves $\int_{\mathbb{R}^n}p(x,y)=1$. Then under suitable regularity, $(-\Delta)^s\phi$ is the Dirichlet-to-Neumann map for this problem, namely
\begin{equation}
\label{2.3}
(-\Delta)^s\phi(x)=\lim_{y\rightarrow0^+}y^{1-2s}\partial_y\tilde{\phi}(x,y).
\end{equation}
Characterizations (\ref{2.1})-(\ref{2.3}) are all equivalent for instance in Schwartz's space of rapidly decreasing smooth functions.

Now let us consider for a number $m>0$ and $g\in L^2(\mathbb{R}^n)$ the equation
\begin{equation*}
(-\Delta)^s\phi+m\phi=g~\mathrm{in}~\mathbb{R}^n.
\end{equation*}
Then in terms of Fourier transform, this problem, for $\phi\in L^2,$ reads
\begin{align*}
(|\xi|^{2s}+m)\hat{\phi}=\hat{g}
\end{align*}
and has a unique solution $\phi\in H^{2s}(\mathbb{R}^n)$ given by the convolution
\begin{equation}
\label{2.4}
\phi(x)=T_m[g]:=\int_{\mathbb{R}^n}G(x-z)g(z)\mathrm{d}z,
\end{equation}
where
$$\widehat{G}(\xi)=\frac{1}{|\xi|^{2s}+m}.$$
Using the characterization (\ref{2.3}) written in weak form, $\phi$ can be characterized by $\phi(x)=\tilde{\phi}(x,0)$ in trace sense, where $\tilde{\phi}\in H$ is the unique solution of
\begin{equation}
\label{2.5}
\int\int_{\mathbb{R}_+^{n+1}}\nabla\tilde{\phi}\nabla\varphi y^{1-2s}+m\int_{\mathbb{R}^n}\phi\varphi=\int_{\mathbb{R}^n}g\varphi,~\mathrm{for~all}~\varphi\in H,
\end{equation}
where $H$ is the Hilbert space of functions $\varphi\in H_{\mathrm{loc}}^1(\mathbb{R}_+^{n+1})$ such that
\begin{equation*}
\|\varphi\|_H^2:=\int\int_{\mathbb{R}_+^{n+1}}|\nabla\varphi|^2 y^{1-2s}+m\int_{\mathbb{R}^n}|\varphi|^2<+\infty,
\end{equation*}
or equivalently the closure of the set of all functions in $C_c^{\infty}(\overline{\mathbb{R}_+^{n+1}})$ under this norm.

A useful fact for our purpose is the equivalence of the representations (\ref{2.4}) and (\ref{2.5}) for $g\in L^2(\mathbb{R}^n).$

\begin{lemma}
\label{le2.1}
Let $g\in L^2(\mathbb{R}^n)$. Then the unique solution $\tilde{\phi}\in H$ of problem (\ref{2.5}) is given by the $s-$harmonic extension of the function $\phi=T_m[g]=G*g.$
\end{lemma}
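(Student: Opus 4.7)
The plan is to verify directly that the $s$-harmonic extension $\tilde{\phi}$ of $\phi:=T_m[g]=G\ast g$ lies in $H$ and satisfies the weak formulation \eqref{2.5}, and then to obtain uniqueness in $H$ from a Lax--Milgram argument applied to the bilinear form defining the norm on $H$.

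First I would confirm that $\phi\in H^{2s}(\mathbb{R}^n)$ and that its Poisson extension $\tilde{\phi}$ lies in $H$. From $\hat{\phi}(\xi)=\hat{g}(\xi)/(|\xi|^{2s}+m)$ and $g\in L^2$ one reads off $(1+|\xi|^{2s})\hat{\phi}\in L^2$, so $\phi\in L^2\cap H^s$ with $\|\phi\|_{L^2}+\||\xi|^s\hat{\phi}\|_{L^2}<\infty$. The standard energy identity for the Caffarelli--Silvestre extension then gives
\begin{equation*}
\int\!\!\int_{\mathbb{R}^{n+1}_+}|\nabla\tilde{\phi}|^2\, y^{1-2s}\,dx\,dy \;=\; c_{n,s}\int_{\mathbb{R}^n}|\xi|^{2s}|\hat{\phi}(\xi)|^2\,d\xi \;<\;\infty,
\end{equation*}
while the trace $\tilde{\phi}(\cdot,0)=\phi$ lies in $L^2(\mathbb{R}^n)$. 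Together these show $\|\tilde{\phi}\|_H<\infty$.

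Next I would verify \eqref{2.5} by integration by parts. For $\varphi\in C_c^{\infty}(\overline{\mathbb{R}^{n+1}_+})$, applying the divergence theorem on the strip $\mathbb{R}^n\times(\delta,R)$ and then letting $\delta\to 0^+$, $R\to+\infty$ yields
\begin{equation*}
\int\!\!\int_{\mathbb{R}^{n+1}_+} y^{1-2s}\,\nabla\tilde{\phi}\cdot\nabla\varphi\,dx\,dy \;=\; -\int_{\mathbb{R}^n}\Big(\lim_{y\to 0^+}y^{1-2s}\partial_y\tilde{\phi}\Big)\,\varphi(x,0)\,dx,
\end{equation*}
because $\tilde{\phi}$ is $s$-harmonic in $\mathbb{R}^{n+1}_+$. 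By \eqref{2.3} the right-hand boundary term equals $-\int_{\mathbb{R}^n}(-\Delta)^s\phi\cdot\varphi(x,0)\,dx$, and since $(-\Delta)^s\phi=g-m\phi$ in $L^2(\mathbb{R}^n)$ by the very definition of $T_m$, one gets \eqref{2.5} for $\varphi\in C_c^\infty(\overline{\mathbb{R}^{n+1}_+})$. A density argument extends it to all $\varphi\in H$. Uniqueness is then immediate: the bilinear form on the left-hand side of \eqref{2.5} is exactly the inner product associated with $\|\cdot\|_H^2$ (with the trace on $\{y=0\}$ controlled by the ambient $H$-norm), so it is continuous and coercive on $H$, and Lax--Milgram gives at most one solution.

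The main obstacle is the rigorous handling of the boundary limit $\lim_{y\to 0^+}y^{1-2s}\partial_y\tilde{\phi}$, since for $g\in L^2$ we only know $\phi\in H^{2s}$ and cannot appeal directly to pointwise smoothness. This is treated by first proving \eqref{2.5} for Schwartz data $g$, where (2.1)--(2.3) are equivalent and the pointwise limit is classical, and then extending by continuity: both sides of \eqref{2.5} depend continuously on $g$ in $L^2$ through the bounds $\|\tilde{\phi}\|_H\leq C\|g\|_{L^2}$ obtained in the first step.
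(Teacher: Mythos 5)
The paper does not actually prove Lemma~\ref{le2.1}; it simply cites Lemma~2.1 of \cite{ddw}. Your argument is the natural one and is, in substance, what appears in that reference: read off from the Fourier symbol that $\phi=G\ast g\in H^{2s}\subset H^{s}$, so its Poisson extension has finite weighted Dirichlet energy and $L^{2}$ trace and hence lies in $H$; integrate by parts against test functions to identify the boundary term with $(-\Delta)^{s}\phi=g-m\phi$; and obtain uniqueness because the bilinear form on the left of \eqref{2.5} is precisely the inner product of $H$. Your remark that one should first work with Schwartz data and then pass to $g\in L^{2}$ using the a priori bound $\|\tilde\phi\|_{H}\le C\|g\|_{L^{2}}$ is exactly the right way to justify the pointwise boundary limit, and it closes the only real technical gap.

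One detail worth making explicit: \eqref{2.3} as printed reads $(-\Delta)^{s}\phi=\lim_{y\to 0^{+}}y^{1-2s}\partial_{y}\tilde\phi$ with no minus sign, whereas the standard Caffarelli--Silvestre identity carries a minus (up to a normalizing constant). If you substitute \eqref{2.3} literally into your integration-by-parts identity
\begin{equation*}
\int\!\!\int_{\mathbb{R}^{n+1}_{+}}y^{1-2s}\,\nabla\tilde\phi\cdot\nabla\varphi
=-\int_{\mathbb{R}^{n}}\Bigl(\lim_{y\to 0^{+}}y^{1-2s}\partial_{y}\tilde\phi\Bigr)\varphi(\cdot,0),
\end{equation*}
the signs do not close to \eqref{2.5}; with the usual minus-sign normalization they do. This is almost certainly a typo in the paper rather than a flaw in your reasoning, but when writing the proof up you should state the sign convention you are using so that the final identity is visibly consistent.
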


For a proof, one can see Lemma 2.1 in \cite{ddw}. Let us recall the main properties of the fundamental solution $G(x)$ in the representation (\ref{2.4}), which are stated for instance in \cite{fqt} and \cite{fls}.

We have that $G$ is radially symmetric and positive, $G\in C^{\infty}(\mathbb{R}^n\setminus\{0\})$ satisfying
\begin{itemize}
  \item $$|G(x)|+|x||\nabla G(x)|\leq \frac{C}{|x|^{n-2s}}~\mathrm{for~all}~|x|\leq1,$$
  \item $$\lim_{|x|\rightarrow\infty}G(x)|x|^{n+2s}=\gamma>0,$$
  \item $$|x||\nabla G(x)|\leq\frac{C}{|x|^{n+2s}}~\mathrm{for~all}~|x|\geq1.$$
\end{itemize}

In next lemma, we get a more specific behavior of the Green function $G(x)$ around $x=0$ for $n=1.$

\begin{lemma}
\label{le2.2}
Let $G(x)$ be the Green function of the following equation
\begin{equation}
\label{2.6}
(-\Delta)^sG(x)+G(x)=\delta(x).
\end{equation}
Then, we have
\begin{align}
\label{2.7}
G(x)=\left\{\begin{array}{ll}
\mathfrak{a}_0+\mathfrak{a}_1|x|^{2s-1}+O(|x|^{\min\{2,4s-1\}})~\mathrm{as}~|x|\rightarrow0,~&\mathrm{if}~s\in(\frac12,1),\\
-\frac{1}{\pi}\log|x|+\mathfrak{a}_2+O(|x|)~\mathrm{as}~|x|\rightarrow0,~&\mathrm{if}~s=\frac12,
\end{array}\right.
\end{align}
where $\mathfrak{a}_0,\mathfrak{a}_1,\mathfrak{a}_2$ will be given in the proof.
\end{lemma}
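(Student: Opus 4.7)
The plan is to work directly from the Fourier representation
\begin{equation*}
G(x)=\frac{1}{2\pi}\int_{\mathbb R}\frac{e^{ix\xi}}{|\xi|^{2s}+1}\,d\xi=\frac{1}{\pi}\int_0^\infty\frac{\cos(x\xi)}{\xi^{2s}+1}\,d\xi,
\end{equation*}
valid in one dimension, and to isolate the pieces of the integrand responsible for the singular behavior at $x=0$. For $s\in(\frac12,1)$ the kernel $(\xi^{2s}+1)^{-1}$ is $L^1$, while for $s=\frac12$ oscillation of the cosine saves the integral at infinity.

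For $s\in(\frac12,1)$ I first set $\mathfrak{a}_0=G(0)=\frac{1}{\pi}\int_0^\infty(\xi^{2s}+1)^{-1}\,d\xi=\frac{1}{2s\sin(\pi/(2s))}$ by the standard Mellin integral. Subtracting gives $G(x)-\mathfrak{a}_0=\frac{1}{\pi}\int_0^\infty\frac{\cos(x\xi)-1}{\xi^{2s}+1}\,d\xi$. The decomposition $\frac{1}{\xi^{2s}+1}=\frac{1}{\xi^{2s}}-\frac{1}{\xi^{2s}(\xi^{2s}+1)}$ then splits this into two pieces. The first, $\int_0^\infty\frac{\cos(x\xi)-1}{\xi^{2s}}\,d\xi$, is convergent (at $0$ because $2s<2$, at $\infty$ because $2s>1$) and, after the rescaling $\eta=|x|\xi$, equals $|x|^{2s-1}\cdot\int_0^\infty\frac{\cos\eta-1}{\eta^{2s}}\,d\eta$, producing the $\mathfrak{a}_1|x|^{2s-1}$ term with an explicit $\mathfrak{a}_1$.

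The second piece $I(x):=\int_0^\infty\frac{\cos(x\xi)-1}{\xi^{2s}(\xi^{2s}+1)}\,d\xi$ supplies the remainder. When $s>\frac34$ the bound $\xi^{2-2s}/(\xi^{2s}+1)\in L^1(0,\infty)$ lets me Taylor-expand $\cos(x\xi)-1=-\frac12 x^2\xi^2+O(x^4\xi^4)$ and pass to the limit, giving $I(x)=O(x^2)$. When $s\in(\frac12,\frac34]$ the same weight is not integrable at infinity, so I instead split the integral at $\xi=1/|x|$: on $(0,1/|x|)$ the Taylor estimate controls the integrand by $x^2\xi^{2-4s}$, whose integral is $O(x^2\cdot|x|^{4s-3})=O(|x|^{4s-1})$, while on $(1/|x|,\infty)$ the crude bound $|\cos-1|\le2$ combined with $\int_{1/|x|}^\infty\xi^{-4s}\,d\xi=O(|x|^{4s-1})$ gives the same rate. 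Together these yield $I(x)=O(|x|^{\min\{2,4s-1\}})$, completing the first regime.

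For $s=\frac12$ I change variables $u=|x|\xi$ to obtain $G(x)=\frac{1}{\pi}\int_0^\infty\frac{\cos u}{u+|x|}\,du$, and then write
\begin{equation*}
\int_0^\infty\frac{\cos u}{u+|x|}\,du=\int_0^1\frac{du}{u+|x|}+\int_0^1\frac{\cos u-1}{u+|x|}\,du+\int_1^\infty\frac{\cos u}{u+|x|}\,du.
\end{equation*}
The first integral equals $\log(1+|x|)-\log|x|=-\log|x|+O(|x|)$, supplying the logarithmic singularity with its coefficient $-1/\pi$. The second is bounded by dominated convergence and equals its value at $|x|=0$ up to $O(|x|)$, and the third—handled by a single integration by parts to integrate the cosine—converges to a finite limit with the same error. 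Collecting these gives $G(x)=-\frac{1}{\pi}\log|x|+\mathfrak{a}_2+O(|x|)$, with $\mathfrak{a}_2=\frac{1}{\pi}\bigl(\int_0^1\frac{\cos u-1}{u}\,du+\int_1^\infty\frac{\cos u}{u}\,du\bigr)$.

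The main obstacle is the remainder estimate in the first regime near the threshold $s=\frac34$, where the Taylor rate $x^2$ and the tail rate $|x|^{4s-1}$ exchange dominance; the splitting at $\xi=1/|x|$ is the natural device, but one has to check that the implied constants remain uniform on compact subintervals of $(\frac12,1)$, which forces the slightly awkward $\min\{2,4s-1\}$ appearing in the statement.
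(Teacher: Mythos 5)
Your proof is correct and takes essentially the same approach as the paper: both start from the one-dimensional Fourier representation $G(x)=\tfrac{1}{\pi}\int_0^\infty\frac{\cos(x\xi)}{1+\xi^{2s}}\,d\xi$, peel off $\mathfrak{a}_0=G(0)$, produce the $\mathfrak{a}_1|x|^{2s-1}$ term by rescaling the singular kernel, and bound the remainder by splitting the domain of integration. After the substitution $t=|x|\xi$, your decomposition $\tfrac{1}{\xi^{2s}+1}=\tfrac{1}{\xi^{2s}}-\tfrac{1}{\xi^{2s}(\xi^{2s}+1)}$ is literally the same as the paper's $\tfrac{1}{t^{2s}+x^{2s}}=\tfrac{1}{t^{2s}}-\tfrac{x^{2s}}{t^{2s}(t^{2s}+x^{2s})}$, and your $s=\tfrac12$ computation reproduces the paper's $\mathfrak{a}_2$ after noting $\cos u-1=-2\sin^2(u/2)$ and shifting the splitting point from $\tfrac{\pi}{2}$ to $1$.

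One small imprecision in your remainder estimate for $s\in(\tfrac12,\tfrac34]$: on $(0,1/|x|)$ the pointwise bound $x^2\xi^{2-4s}$ overstates the integrand near $\xi=0$, where $\tfrac{1}{\xi^{2s}(\xi^{2s}+1)}=O(\xi^{-2s})$ rather than $O(\xi^{-4s})$; indeed $\int_0^{1/|x|}\xi^{2-4s}\,d\xi$ diverges at the origin when $s=\tfrac34$. The fix is the paper's three-way split (here: split again at $\xi=1$, using $x^2\xi^{2-2s}$ on $(0,1)$ and $x^2\xi^{2-4s}$ on $(1,1/|x|)$), which you implicitly acknowledge when flagging the $s=\tfrac34$ threshold. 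This also surfaces a genuine, if inconsequential, slip that the paper itself glosses over: at $s=\tfrac34$ both your argument and the paper's middle integral give $O(x^2\log\tfrac1{|x|})$ rather than the stated $O(x^{\min\{2,4s-1\}})=O(x^2)$; the logarithm is harmless for every downstream estimate, but neither write-up addresses it.
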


\noindent{\bf{Remark}}: $\mathfrak{a}_1$ is negative.

\begin{proof}
By using fourier transform, we can write the equation (\ref{2.6}) as
\begin{align}
\label{2.8}
(|\xi|^{2s}+1)\widehat{G}(\xi)=1.
\end{align}
Therefore, we have
\begin{align}
\label{2.9}
G(x)=\frac{1}{2\pi}\int_{-\infty}^{\infty}\frac{e^{ix\xi}}{1+|\xi|^{2s}}\mathrm{d}\xi
=\frac{1}{\pi}\int_{0}^{\infty}\frac{\cos(x\xi)}{1+|\xi|^{2s}}\mathrm{d}\xi.
\end{align}
We note $G(x)$ is an even function, in the following, we only need to consider the behavior of $G(x)$ when $x\rightarrow0^+$. We divide our discussion into two parts.

If $s\in(\frac12,1),$ we have
\begin{align}
\label{2.10}
\frac{1}{\pi}\int_{0}^{\infty}\frac{\cos(x\xi)}{1+|\xi|^{2s}}\mathrm{d}\xi=~&
\frac{1}{\pi}\Big(\int_0^{\infty}\frac{1}{1+|\xi|^{2s}}\mathrm{d}\xi
-2\int_0^{\infty}\frac{(\sin\frac{x\xi}{2})^2}{1+|\xi|^{2s}}\mathrm{d}\xi\Big)\nonumber\\
=~&\frac{1}{2s\sin(\frac{\pi}{2s})}-\frac{2}{\pi}\int_0^{\infty}\frac{(\sin\frac{x\xi}{2})^2}{1+|\xi|^{2s}}\mathrm{d}\xi
\nonumber\\
=~&\frac{1}{2s\sin(\frac{\pi}{2s})}
-\frac{2}{\pi}x^{2s-1}\int_0^{\infty}\frac{(\sin\frac{t}{2})^2}{t^{2s}+x^{2s}}\mathrm{d}t\nonumber\\
=~&\frac{1}{2s\sin(\frac{\pi}{2s})}-\frac{2}{\pi}x^{2s-1}\int_0^{\infty}\Big(\frac{(\sin\frac{t}{2})^2}{t^{2s}}
-\frac{x^{2s}(\sin\frac{t}{2})^2}{t^{2s}(t^{2s}+x^{2s})}\Big)\mathrm{d}t\nonumber\\
=~&\mathfrak{a}_0+\mathfrak{a}_1x^{2s-1}+
\frac{2}{\pi}x^{4s-1}\int_0^{\infty}\frac{(\sin\frac{t}{2})^2}{t^{2s}(t^{2s}+x^{2s})}\mathrm{d}t.
\end{align}
where
$$\mathfrak{a}_0=\frac{1}{2s\sin(\frac{\pi}{2s})},~
\mathfrak{a}_1=-\frac{2}{\pi}\int_0^{\infty}\frac{(\sin\frac{t}{2})^2}{t^{2s}}\mathrm{d}t=-\frac{2}{\pi}s\Gamma(-2s)\sin(\pi s)<0.$$
We can write the last term on the right hand side of (\ref{2.10}) as
\begin{equation}
\label{2.11}
\int_0^{\infty}\frac{x^{4s-1}(\sin\frac{t}{2})^2}{t^{2s}(t^{2s}+x^{2s})}\mathrm{d}t
=\Big(\int_{0}^{x}+\int_x^1+\int_1^{\infty}\Big)
\frac{x^{4s-1}(\sin\frac{t}{2})^2}{t^{2s}(t^{2s}+x^{2s})}\mathrm{d}t.
\end{equation}
For each term on the right hand side of (\ref{2.11}), we have
\begin{align*}
&\int_{0}^{x}\frac{x^{4s-1}(\sin\frac{t}{2})^2}{t^{2s}(t^{2s}+x^{2s})}\mathrm{d}t
\leq Cx^{2s-1}\int_0^x\frac{t^2}{t^{2s}}\mathrm{d}t=O(x^2),
\end{align*}
\begin{align*}
&\int_{x}^{1}\frac{x^{4s-1}(\sin\frac{t}{2})^2}{t^{2s}(t^{2s}+x^{2s})}\mathrm{d}t
\leq Cx^{4s-1}\int_x^1\frac{t^2}{t^{4s}}\mathrm{d}t=O(x^2+x^{4s-1}),
\end{align*}
\begin{align*}
&\int_{1}^{\infty}\frac{x^{4s-1}(\sin\frac{t}{2})^2}{t^{2s}(t^{2s}+x^{2s})}\mathrm{d}t
\leq Cx^{4s-1}\int_1^{\infty}\frac{(\sin\frac t2)^2}{t^{4s}}\mathrm{d}t=O(x^{4s-1}).
\end{align*}
Hence, we have
$$\int_0^{\infty}\frac{x^{4s-1}(\sin\frac{t}{2})^2}{t^{2s}(t^{2s}+x^{2s})}\mathrm{d}t=O(x^{\min\{2,4s-1\}}),$$
as a result, we get the first one in (\ref{2.7}).

When $s=\frac12,$ we have
\begin{align*}
\int_0^{\infty}\frac{\cos(x\xi)}{1+\xi}\mathrm{d}\xi=~&
\int_0^{\frac{\pi}{2x}}\frac{\cos(x\xi)}{1+\xi}\mathrm{d}\xi
+\int_{\frac{\pi}{2x}}^{\infty}\frac{\cos(x\xi)}{1+\xi}\mathrm{d}\xi\\
=~&\int_0^{\frac{\pi}{2x}}\Big(\frac{1}{1+\xi}-\frac{2(\sin \frac{x\xi}{2})^2}{1+\xi}\Big)\mathrm{d}\xi
+\int_{\frac{\pi}{2x}}^{\infty}\frac{\cos(x\xi)}{1+\xi}\mathrm{d}\xi\\
=~&\log(1+\frac{\pi}{2x})-2\int_0^{\frac{\pi}{2}}\frac{(\sin\frac t2)^2}{x+t}\mathrm{d}t
+\int_{\frac{\pi}{2}}^{\infty}\frac{\cos t}{x+t}\mathrm{d}t\\
=~&-\log x+\log\frac{\pi}{2}-2\int_0^{\frac{\pi}{2}}\frac{(\sin\frac t2)^2}{t}\mathrm{d}t+\int_{\frac{\pi}{2}}^{\infty}\frac{\cos t}{t}\mathrm{d}t\\
&+2x\int_0^{\frac{\pi}{2}}\frac{(\sin\frac t2)^2}{t(x+t)}\mathrm{d}t-x\int_{\frac{\pi}{2}}^{\infty}\frac{\cos t}{t(x+t)}\mathrm{d}t+O(x).
\end{align*}
It is easy to see,
\begin{align*}
\int_0^{\frac{\pi}{2}}\frac{(\sin\frac t2)^2}{t(x+t)}\mathrm{d}t,~\int_{\frac{\pi}{2}}^{\infty}\frac{\cos t}{t(x+t)}\mathrm{d}t=O(1).
\end{align*}
Therefore, for $s=\frac12,$ we obtain
\begin{equation}
\label{2.12}
G(x)=-\frac{1}{\pi}\log x+\mathfrak{a}_2+O(x)~\mathrm{as}~x\rightarrow0^+,
\end{equation}
where
$$\mathfrak{a}_2=\frac{1}{\pi}\Big(\log\frac{\pi}{2}-2\int_0^{\frac{\pi}{2}}\frac{(\sin\frac t2)^2}{t}\mathrm{d}t+\int_{\frac{\pi}{2}}^{\infty}\frac{\cos t}{t}\mathrm{d}t\Big).$$
\end{proof}

\vspace{1cm}
\section{The scheme of the proof}
Our strategy of the proof of the main results is based on the idea of solving the second equation in (\ref{1.1}) for $v$ and then working with a nonlocal elliptic PDE rather than directly with the system. It is convenient to do this by replacing first $u$ by $\tau_{\varepsilon}u$ and $v$ by $\tau_{\varepsilon}v$, which transforms (\ref{1.1}) into the problem
\begin{equation}
\label{3.1}
\left\{\begin{array}{ll}
(-\Delta)^su+u-\frac{u^2}{v}=0~&\mathrm{in}~\mathbb{R},\\
(-\Delta)^sv+\varepsilon^{2s}v-\tau_{\varepsilon}u^2=0~&\mathrm{in}~\mathbb{R},\\
u,v>0,\quad u,v\rightarrow0,&\mathrm{as}~|x|\rightarrow\infty,
\end{array}\right.
\end{equation}
with the choice of the parameter $\tau_{\varepsilon}$ as in (\ref{1.13}),
\begin{equation}
\label{3.2}
\tau_{\varepsilon}=
\left\{\begin{array}{lll}
\Big(\frac{k}{2s\sin(\frac{\pi}{2s})}\varepsilon^{1-2s}\int_{\mathbb{R}^1}U^2(y)\mathrm{d}y\Big)^{-1},&s\in(\frac12,1),\\
\Big(\frac{k}{\pi}\log\frac{1}{\varepsilon}\int_{\mathbb{R}^1}U^2(y)\mathrm{d}y\Big)^{-1},&s=\frac12.
\end{array}\right.
\end{equation}
Then, we have
$$u\sim\sum_{i=1}^kU(x-q_i),\quad v\sim1,$$
i.e., the height of the bumps near the $q_i$ remains bounded as $\varepsilon\rightarrow0.$

In the sequel, by $T(h)$ we denote the unique solution of the equation
\begin{align*}
&(-\Delta)^sv+\varepsilon^{2s}v=\tau_{\varepsilon}h~\mathrm{in}~\mathbb{R},v(x)\rightarrow0~\mathrm{as}~
|x|\rightarrow+\infty,
\end{align*}
for $h\in L^2(\mathbb{R})$, namely $T=\tau_{\varepsilon}\big((-\Delta)^{s}+\varepsilon^{2s}\big)^{-1}$. Solving the second equation for $v$ in (\ref{3.1}) we get $v=T(u^2)$, which tends to the nonlocal equation
\begin{equation}
\label{3.3}
(-\Delta)^su+u-\frac{u^2}{T(u^2)}=0.
\end{equation}
We consider points $q_1,q_2,\cdots,q_{k},~k=2m$ in $\mathbb{R}$ which are the candidates for the location of spikes.
These points $(q_1,q_2,\cdots,q_k)$ are restricted in the following set
\begin{align}
\label{3.4}
(q_1,q_2,\cdots,q_k)\in \Lambda_s=\Big\{&(q_1,q_2,\cdots,q_k)\mid q_i=-q_{k+1-i},~q_1>q_2>\cdots>q_k,\nonumber\\
&(q_1,q_2,\cdots,q_m)\in Q_{s,\eta}\Big\}.
\end{align}
Let us write
$$W(x)=\sum_{i=1}^kU(x-q_i).$$
We look for a solution to (\ref{3.3}) in the form $u=W+\phi$, where $\phi$ is a lower order term. Then, formally, we have
$$T(u^2)=T(W^2)+2T(W\phi)+h.o.t.,$$
where $h.o.t.$ corresponds to the higher order terms. We denote $V=T(W^2).$ By $G_{\varepsilon}(|y|)$ we denote the fundamental solution to $(-\Delta)^s+\varepsilon^{2s}$ in $\mathbb{R}$. We can write
\begin{align*}
T(W^2)=\tau_{\varepsilon}\int W^2(y)G_{\varepsilon}(|x-y|)dy\sim \tau_{\varepsilon}\sum_{i=1}^k\int U^2(x-q_i)G_{\varepsilon}(|x-y|)\mathrm{d}y,
\end{align*}
where the integration extends over all $\mathbb{R}$. For $|x|=o(\varepsilon^{-1})$ we have
\begin{align*}
G_{\varepsilon}(|x|)=
\left\{\begin{array}{ll}
\alpha_0\varepsilon^{1-2s}+\alpha_1|x|^{2s-1}+O(\varepsilon^{3-2s}|x|^2+\varepsilon^{2s}|x|^{4s-1}),
&~\mathrm{if}~s\in(\frac12,1),\\
-\frac{1}{\pi}\log(\varepsilon|x|)+O(1),&~\mathrm{if}~s=\frac12.
\end{array}\right.
\end{align*}
Using this, and the definition of $\tau_{\varepsilon}$ we get that near the $q_i$'s,
\begin{equation*}
V(x)=1+h.o.t..
\end{equation*}
Arguing similarly we get
\begin{align*}
T(W\phi)=\omega\int W\phi+h.o.t.,~\omega=\frac{1}{k\int U^2}.
\end{align*}
Then
\begin{align*}
\frac{u^2}{v}=\frac{W^2+2W\phi+h.o.t.}{V+2T(W\phi)+h.o.t.}=\frac{W^2}{V}+2W\phi-2\omega W^2\int W\phi+h.o.t..
\end{align*}
Substituting all this into (\ref{3.3}) we obtain the equation for $\phi$
\begin{equation}
\label{3.5}
(-\Delta)^s\phi+(1-2W)\phi+2\omega W^2\int W\phi=S(W)+N(\phi),
\end{equation}
where $S(W)=-(-\Delta)^sW-W+\frac{W^2}{V}$ and $N(\phi)$ is defined by
\begin{equation*}
N(\phi)=\frac{(W+\phi)^2}{T((W+\phi)^2)}-\frac{W^2}{V}-2W\phi+2\omega W^2\int W\phi,
\end{equation*}
represents higher order terms in $\phi$.

Thus we have reduced the problem of finding solutions to (\ref{3.1}) to the problem of solving (\ref{3.5}) for $\phi$. We set $\frac{\partial W}{\partial q_j}=Z_j$. Rather than solving problem (\ref{3.5}), we consider first the following auxiliary problem: given points $q_j$, find a function $\phi$ such that for certain constants $c_i$ the following equation is satisfied
\begin{align}
\label{3.6}
L\phi=S(W)+N(\phi)+\sum_jc_jZ_j,~\langle \phi,Z_j\rangle=0,~j=1,2,\cdots,k,
\end{align}
where
$$L\phi=(-\Delta)^{s}\phi+(1-2W)\phi+2\omega W^2\int W\phi$$
and $\langle\cdot,\cdot\rangle$ denotes the $L^2$ inner product.

We will prove in section 4 that this problem is uniquely solvable within a class of small functions $\phi$ for all points $(q_1,q_2,\cdots,q_k)$ satisfying constraints (\ref{3.4}). Besides, the resulting constants $c_i(q_1,q_2,\cdots,q_k)$ admit the expansion (\ref{1.14}) . We will of course get a solution of the full problem whenever the points $q_i$ are adjusted in such a way that all of $c_i$'s vanish. We show the existence of such points in section 8, where the main result is finally established. In remainder of the paper we rigorously carry out the program outlined above. In particular, we will need to understand the invertibility properties of the linear operator $L$ first. We will do it in the next section.

\vspace{1cm}
\section{The linear operator}
\begin{proposition}
\label{pr4.1}
Let $U$ be the unique, positive, radially symmetric solution to
\begin{equation}
\label{4.1}
(-\Delta)^sU+U-U^2=0.
\end{equation}
\begin{itemize}
  \item [(a)] There exists a positive constant $d_s$ depending on $s$ only such that, as $|x|\rightarrow\infty,$ the following formula holds
  \begin{equation*}
  U(|x|)=\frac{\mathfrak{b}_s}{|x|^{1+2s}}(1+o(1)).
  \end{equation*}
  Moreover, $U'(x)<0$ for $x>0$ and
  \begin{equation*}
  U'(x)=-\frac{(1+2s)\mathfrak{b}_s}{x^{2+2s}}(1+o(1))~\mathrm{as}~x\rightarrow+\infty.
  \end{equation*}
  \item [(b)] Let $L_0=(-\Delta)^s+(1-2U)\mathrm{id}$. Then we have
  \begin{equation*}
  \mathrm{Ker}(L_0)=\mathrm{span}\Big\{\frac{\partial U}{\partial x}\Big\}.
  \end{equation*}
  \item [(c)] Let $L$ be the operator defined in (\ref{3.6}) and let
  \begin{equation*}
  L^*=(-\Delta)^s\phi+(1-2W)\phi+2\omega W\int W^2\phi
  \end{equation*}
  be its formal disjoint. If we denote
  \begin{equation*}
  Z_j=\frac{\partial W}{\partial q_j},
  \end{equation*}
then, for all $j=1,2,\cdots,k$, we have
\begin{align*}
&LZ_j=O(\min_{i\neq j}|q_i-q_j|^{-(2s+1)})W(x)^2+O(\min_{i\neq j}|q_i-q_j|^{-(2s+2)})W(x),\\
&\quad\quad\quad\quad\quad\quad\quad L^*Z_j=O(\min_{i\neq j}|q_i-q_j|^{-(2s+2)})W(x).
\end{align*}
\end{itemize}
\end{proposition}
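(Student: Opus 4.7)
The plan for part (a) is to invert $(-\Delta)^s U + U = U^2$ against the Green's function $G$ of $(-\Delta)^s + I$ introduced in Section~2, obtaining $U = G \ast U^2$. Since $G(x) = \gamma|x|^{-(1+2s)}(1+o(1))$ at infinity (as recalled just before Lemma~\ref{le2.2}) and $U^2 \in L^1(\mathbb{R})$ by standard a priori bounds for positive subcritical fractional NLS ground states, dominated convergence lets one pull the Green's function asymptotic through the convolution to obtain
\begin{equation*}
U(x) = G(x)\int_{\mathbb{R}} U^2(y)\,dy + o(|x|^{-(1+2s)}),
\end{equation*}
which identifies $\mathfrak{b}_s = \gamma\int U^2$. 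Strict radial decrease $U'(x)<0$ for $x>0$ follows from the strict radial decrease of $G$ combined with the positivity of $U^2$ in the same identity. The derivative asymptotic comes from differentiating under the integral and matching with the corresponding expansion of $G'$ at infinity.

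Part (b) is precisely the non-degeneracy theorem of Frank--Lenzmann~\cite{fl}, which I invoke directly.

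For part (c), I differentiate the equation for $U$ in space, getting $(-\Delta)^s U' + U' - 2UU' = 0$, so that
\begin{equation*}
(-\Delta)^s Z_j + Z_j - 2U(\cdot - q_j) Z_j = 0
\end{equation*}
for $Z_j = -U'(\cdot - q_j)$. Substituting this identity into the definitions of $L$ and $L^*$ produces
\begin{align*}
LZ_j &= -2\sum_{i\neq j} U(x-q_i) Z_j(x) + 2\omega W^2(x)\int_{\mathbb{R}} W Z_j\,dx, \\
L^* Z_j &= -2\sum_{i\neq j} U(x-q_i) Z_j(x) + 2\omega W(x)\int_{\mathbb{R}} W^2 Z_j\,dx.
\end{align*}
Setting $d = \min_{i\neq j}|q_i - q_j|$, the scalar integrals are handled by Taylor-expanding the slowly varying factor around $q_j$: the constant in the expansion integrates against $U'$ to zero by odd parity, and the leading linear term produces a factor $U'(q_j-q_i) = O(d^{-(2+2s)})$ via part~(a), so that $\int W Z_j$ and $\int W^2 Z_j$ are both $O(d^{-(2+2s)})$. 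The pointwise sum $\sum_{i\neq j} U(x-q_i) Z_j(x)$ is then bounded region by region: near $q_j$ one uses $|U(x-q_i)| \le Cd^{-(1+2s)}$ together with the local bound $|U'(x-q_j)| \le CW(x)^2$ (valid because $W$ is bounded below on the peak and $U'$ vanishes there), producing $O(d^{-(1+2s)}) W^2$; in the tail $|x-q_j| \ge d/2$ one uses $|U'(x-q_j)| \le Cd^{-(2+2s)}$ and $U(x-q_i) \le CW$, producing $O(d^{-(2+2s)}) W$. This yields the announced estimate for $LZ_j$. For $L^* Z_j$, the integral contribution is already $O(d^{-(2+2s)})W$, and a sharper analysis of the pointwise sum---exploiting the integration-by-parts identity $\int W^2 Z_j = 2\int W W' U(\cdot - q_j)$ and the cancellations it generates---eliminates the $W^2$ term, yielding the cleaner bound $O(d^{-(2+2s)}) W$.

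The main technical point is the region-by-region pointwise estimate of $\sum_{i\neq j} U(x-q_i) Z_j(x)$, and in particular the extra cancellation that suppresses the $W^2$ term in $L^* Z_j$; both rest on the sharp decay of $U$ and $U'$ established in part~(a) together with careful balancing of the algebraic rates in the intermediate regime $1 \ll |x-q_j| \ll d$.
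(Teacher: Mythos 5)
For parts (a) and (b) your approach is essentially the same as the paper's: the paper simply cites Frank--Lenzmann \cite{fl}, and your convolution sketch $U = G*U^2$ with the Green function asymptotics is exactly the mechanism behind that result. The paper's entire treatment of (c) is the single sentence ``Part (c) is a consequence of direct computations,'' so you are on your own there; your identity $LZ_j = -2\sum_{i\ne j}U(\cdot-q_i)Z_j + 2\omega W^2\int WZ_j$ (and the analogue for $L^*$) from differentiating the profile equation is the correct starting point, and your estimate $\int WZ_j=O(d^{-(2+2s)})$, $\int W^2Z_j=O(d^{-(2+2s)})$ is correct (though the rigorous route is via $\int U(x-q_i)U'(x-q_j)\,dx=\partial_{q_j}(U*U)(q_j-q_i)$ rather than a Taylor expansion --- $U(\cdot-q_i)$ is \emph{not} slowly varying on the full scale of $U'(\cdot-q_j)$, whose tail is only algebraic).

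There is, however, a genuine gap in your pointwise estimate of $\sum_{i\ne j}U(x-q_i)Z_j(x)$. You split into ``near $q_j$'' where $|U'(x-q_j)|\le CW(x)^2$ (valid only on a fixed neighborhood $|x-q_j|\le R_0$, as you note) and ``tail'' $|x-q_j|\ge d/2$ where $|U'(x-q_j)|\le Cd^{-(2+2s)}$. But the intermediate range $R_0\ll |x-q_j|\ll d$ is not covered, and neither of your two bounds extends there. For example, take $k=2$, $q_j=0$, $q_i=d$, $x\sim\sqrt d$: then $U(x-q_i)|U'(x)|\sim d^{-(1+2s)}\cdot d^{-(1+s)}=d^{-(2+3s)}$, while $d^{-(1+2s)}W(x)^2\sim d^{-(2+4s)}$ and $d^{-(2+2s)}W(x)\sim d^{-(5/2+3s)}$, so the target bound is off by powers $d^{s}$ and $d^{1/2}$ respectively. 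This is precisely the regime in which the algebraic (rather than exponential) decay of $U$ bites, and it is the point that makes ``direct computation'' nontrivial here; your argument needs a dedicated analysis in this window.

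Separately, your claim that $L^*Z_j$ enjoys ``an extra cancellation \ldots that eliminates the $W^2$ term'' is not supported by what you wrote. The local (pointwise) parts of $LZ_j$ and $L^*Z_j$ are \emph{identical}, namely $-2\sum_{i\ne j}U(\cdot-q_i)Z_j$; the two operators differ only in the nonlocal term, which for $L^*$ is $2\omega W\int W^2Z_j=O(d^{-(2+2s)})W$. The integration-by-parts identity $\int W^2Z_j=2\int WW'U(\cdot-q_j)$ concerns only the scalar integral and cannot improve the pointwise sum, so whatever bound you obtain for the pointwise piece must appear in both $LZ_j$ and $L^*Z_j$; the asserted asymmetry needs a different justification.
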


\begin{proof}
The proof of part (a) and (b) is given in \cite[Proposition 1.1 and Theorem 2.3]{fl}. In fact, it was proven in \cite{at} that (apart from translations) the function $U(x)=\frac{2}{1+x^2}$ is the only positive solution of (\ref{4.1}) in $H^{\frac12}(\mathbb{R})$ when $s=\frac12$. Part (c) is a consequence of direct computations.
\end{proof}

We shall carry out the analysis of the linear operator $L$ in a framework of weighted $L^{\infty}$ spaces. For this purpose we consider the following norm for a function defined in $\mathbb{R}$: given points $q_1,q_2,\cdots,q_k$ we define
\begin{equation}
\label{4.2}
\|\phi\|_*=\|\rho(x)^{-1}\phi\|_{L^{\infty}(\mathbb{R})},
\end{equation}
where
$$\rho(x)=\sum_{j=1}^k\frac{1}{(1+|x-q_j|)^{\mu}},~\frac12<\mu\leq1+2s.$$

We first consider a problem that will later give rise to the finite-dimensional reduction. Given a function $h$, $\|h\|_*<\infty$ find a $\phi$ and constants $c_j,~j=1,2,\cdots,k$ such that one has
\begin{equation}
\label{4.3}
\left\{\begin{array}{ll}
L\phi=h+\sum_jc_jZ_j~\mathrm{in}~\mathbb{R},\\
\phi(x)\rightarrow0~\mathrm{as}~|x|\rightarrow\infty,\\
\langle \phi, Z_j\rangle=0~\mathrm{for}~j=1,2,\cdots,k.
\end{array}\right.
\end{equation}
By $\mathbf{c}$ we will denote a vector with components $c_j.$

We refer to a pair $(\phi,\mathbf{c})$ as a solution to (\ref{4.3}). We have the following existence result for (\ref{4.3}).
\begin{theorem}
\label{th4.1}
There exists positive numbers $R$ and $C$ such that, for any points $q_j$ satisfying $|q_i-q_j|>R$ for all $i\neq j$, and $h$ with $\|h\|_*<\infty$, problem (\ref{4.3}) has a unique solution $\phi=\mathcal{T}(h)$ and $\mathbf{c}=\mathbf{c}(h)$. Moreover,
\begin{equation}
\label{4.4}
\|\mathcal{T}(h)\|_*\leq C\|h\|_{*}.
\end{equation}
\end{theorem}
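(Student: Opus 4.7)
The plan is a standard Lyapunov--Schmidt two-step strategy: first establish the a priori estimate (\ref{4.4}) by a blow-up/contradiction argument, then obtain existence by approximation from a solvability statement in a Hilbert-space setting.

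For the a priori estimate I would argue by contradiction. Suppose there exist sequences of configurations $(q_{1,n},\dots,q_{k,n})$ with $\min_{i\neq j}|q_{i,n}-q_{j,n}|\to\infty$, functions $h_n$ with $\|h_n\|_*\to0$, vectors $\mathbf{c}_n$, and $\phi_n$ solving (\ref{4.3}) with $\|\phi_n\|_*=1$. First I would estimate the multipliers: testing the equation against $Z_{j,n}$ and using $\langle L\phi_n,Z_{j,n}\rangle=\langle\phi_n,L^*Z_{j,n}\rangle$, Proposition \ref{pr4.1}(c) shows that the left-hand side is $O(\min_{i\neq j}|q_i-q_j|^{-(2s+2)})$, while the Gram matrix $(\langle Z_{i,n},Z_{j,n}\rangle)$ is uniformly invertible (diagonal terms converge to $\int(U')^2$ and off-diagonal terms decay algebraically in the separation). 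This forces $|c_{j,n}|=o(1)$. Next, translating $\tilde\phi_n^j(y):=\phi_n(y+q_{j,n})$, the uniform bound $|\tilde\phi_n^j(y)|\leq\rho(y+q_{j,n})$ and local fractional-regularity estimates give, after a subsequence, $\tilde\phi_n^j\to\phi_\infty^j$ locally. Since bumps at $q_{i,n}$ with $i\neq j$ drift off to infinity and the nonlocal integral term $2\omega W^2\int W\phi$ becomes localized at the translated origin multiplied by a bounded scalar, a careful accounting shows $\phi_\infty^j$ solves $L_0\phi_\infty^j=0$ with $|\phi_\infty^j(y)|\lesssim(1+|y|)^{-\mu}$. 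By nondegeneracy (Proposition \ref{pr4.1}(b)), $\phi_\infty^j=\alpha_j U'$, and passing to the limit in $\langle\phi_n,Z_{j,n}\rangle=0$ yields $\alpha_j\int(U')^2=0$, hence $\phi_\infty^j\equiv 0$ for every $j$.

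To close the contradiction, I need to upgrade this local vanishing to vanishing in $\|\cdot\|_*$. The idea is to rewrite (\ref{4.3}) as
\begin{equation*}
(-\Delta)^s\phi_n+\phi_n=2W\phi_n-2\omega W^2\!\int\! W\phi_n+h_n+\sum_j c_{j,n}Z_{j,n},
\end{equation*}
represent $\phi_n=G*(\text{RHS})$ via the Green function from Section 2, and show the convolution estimate $\|G*f\|_*\leq C\|f\|_*$ for the weight $\rho$. The localized pieces $W\phi_n$ and $W^2\int W\phi_n$ are $o(1)$ in $\|\cdot\|_*$ because $\phi_n\to 0$ on any fixed neighborhood of the $q_{j,n}$'s (where $W$ is not negligible), and the remaining pieces are $o(1)$ by construction. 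This contradicts $\|\phi_n\|_*=1$ and establishes (\ref{4.4}).

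For existence and uniqueness, I would first solve (\ref{4.3}) in the Hilbert space $X=\{\phi\in H^s(\mathbb{R}):\langle\phi,Z_j\rangle=0,\ j=1,\dots,k\}$ by writing it as the identity plus a compact perturbation of $L_0$ on $X$ and invoking the Fredholm alternative, using Proposition \ref{pr4.1}(b) and the same near-diagonal argument above to rule out nontrivial kernel. For $h$ with $\|h\|_*<\infty$ I would approximate by $L^2$-data, apply the a priori bound to get a limit in $\|\cdot\|_*$, and pass to the limit in the equation. The main obstacle is unquestionably the far-field step: because $U$ only decays polynomially, every putative "escape to infinity" must be excluded by the convolution bound against $\rho$, and this is the only place where the constraint $\tfrac12<\mu\leq 1+2s$ on the weight exponent enters critically, dictated by the decay rate $|x|^{-(1+2s)}$ of $G$ at infinity balanced against the decay of $U$ itself.
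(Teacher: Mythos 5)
Your overall strategy — a priori estimate by blow-up, then Fredholm — is the same as the paper's. But there is a genuine gap in the blow-up step, precisely at the point where the nonlocal term in $L$ enters.

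When you translate $\tilde\phi_n^j(y)=\phi_n(y+q_{j,n})$ and pass to the local limit, the term $2\omega W^2\int W\phi_n$ does \emph{not} disappear: writing $\lambda_n=\int W\phi_n$ (bounded, as you observe, hence $\lambda_n\to\lambda_\infty$ after a subsequence), the limiting equation is
\begin{equation*}
L_0\phi_\infty^j \;=\; -\,2\omega\,U^2\,\lambda_\infty,
\end{equation*}
not $L_0\phi_\infty^j=0$. Nondegeneracy of $L_0$ then does \emph{not} force $\phi_\infty^j\equiv 0$; it only pins down $\phi_\infty^j$ as a fixed multiple of $\lambda_\infty$. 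To conclude you must first prove $\lambda_\infty=0$, and that is not a routine localization fact. It requires an extra structural input: either a self-consistency computation showing $\lambda_\infty=k\lambda_\infty\int U\psi$ with $k\int U\psi\ne 1$ (which, after a Pohozaev-type identity, evaluates to $k\int U\psi=2$), or the device the paper actually uses, namely the test function $Z=\tfrac1s\,x\cdot\nabla W+2W$, which satisfies $L_0Z=-2W+o(1)$ and lets one decompose $\phi_n=a_nW+\psi_n$ with $\psi_n\perp W$ and read off $a_n\to 0$, i.e.\ $\int W\phi_n\to 0$. Without this step your sentence ``a careful accounting shows $\phi_\infty^j$ solves $L_0\phi_\infty^j=0$'' is an assertion, not a proof, and it is exactly the nontrivial part of the lemma.

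Once that is filled in, the remainder is sound and comparable to the paper's: after $\int W\phi_n\to 0$, the equation reduces to $(-\Delta)^s\phi_n+(1-2W)\phi_n=g_n+h_n$ with $\|g_n\|_*\to 0$, and the weighted convolution estimate (Lemma~4.2 of \cite{ddw}, or your own $\|G*f\|_*\lesssim\|f\|_*$) gives the contradiction. For existence, the paper stays in the weighted $L^\infty$ space $\mathcal{Y}$ and builds compactness of $\mathcal{B}=\mathcal{A}[2W\cdot]+\mathcal{A}[-2\omega W^2\int W\cdot]$ directly (Arzelà--Ascoli locally, decay of the Green function at infinity), whereas you propose to work in $H^s$ and then approximate; both are viable (your weight $\rho$ lies in $L^2$ since $\mu>\tfrac12$), but the paper's route avoids the limiting argument.
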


The proof of Theorem \ref{th4.1} relies heavily on the following lemma.
\begin{lemma}
\label{le4.1}
Assume that $q_j^n,j=1,2,\cdots,k,$ are such that $\min_{i\neq j}|q_i^n-q_j^n|\rightarrow\infty$, $\|h_n\|_*\rightarrow0$ and that $\phi_n$ solves
\begin{align*}
&L\phi_n=h_n+\sum_jc_j^nZ_j~\mathrm{in}~\mathbb{R},\\
&\phi_n(x)\rightarrow0,\quad\mathrm{as}~|x|\rightarrow\infty,\\
&\langle\phi_n,Z_j\rangle=0~\mathrm{for}~j=1,2,\cdots,k.
\end{align*}
Then $\|\phi_n\|_*\rightarrow0.$
\end{lemma}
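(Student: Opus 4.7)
My plan is to argue by contradiction: suppose, after passing to a subsequence and renormalizing, $\|\phi_n\|_* = 1$ with $\|h_n\|_* \to 0$ and $c_j^n, \phi_n$ satisfying the stated system. I would then (i) extract a priori bounds on the multipliers $c_j^n$, (ii) blow up at each concentration point to identify the pointwise limits $\phi_\infty^j$ as bounded solutions of a limiting linear problem, (iii) combine the non-degeneracy from Proposition \ref{pr4.1}(b) with the specific choice $\omega = 1/(k\int U^2)$ to force $\phi_\infty^j \equiv 0$, and (iv) upgrade this local vanishing to the global bound $\|\phi_n\|_* \to 0$ via the Green function representation, reaching a contradiction.

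For the multipliers, I would pair the equation with $Z_i$ and integrate. Self-adjointness of the principal part gives $\langle L\phi_n, Z_i\rangle = \langle \phi_n, L^*Z_i\rangle$, which by Proposition \ref{pr4.1}(c) is $o(1)\|\phi_n\|_*$ because $L^*Z_i = O(\min_{l\neq i}|q_l - q_i|^{-(2s+2)})W$ and $\int W\rho$ is bounded. Since $\langle Z_i, Z_l\rangle \to \|U'\|_{L^2}^2 \delta_{il}$ and $|\langle h_n, Z_i\rangle| \leq C\|h_n\|_*$, the resulting near-diagonal linear system yields $c_j^n \to 0$.

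For the inner analysis, set $\tilde\phi_n^j(y) := \phi_n(y + q_j^n)$ and rewrite the equation around $q_j^n$; these translates are uniformly bounded and satisfy an equation with locally bounded right-hand side, so fractional elliptic regularity gives $C^\alpha_{\mathrm{loc}}$ compactness, and along a subsequence $\tilde\phi_n^j \to \phi_\infty^j$ locally uniformly. Passing the equation, the orthogonality condition, and the nonlocal term $\int W\phi_n \to A$ (well-defined by dominated convergence, since $W\rho \in L^1$) to the limit yields
\begin{equation*}
L_0 \phi_\infty^j = -2\omega A \, U^2, \qquad \int \phi_\infty^j(y)\, U'(y)\, dy = 0.
\end{equation*}
Since $\langle U^2, U'\rangle = 0$, the Fredholm alternative combined with Proposition \ref{pr4.1}(b) uniquely determines $\phi_\infty^j = -2\omega A\, \psi_0$, with $\psi_0$ the unique solution of $L_0\psi_0 = U^2$ orthogonal to $U'$. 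In particular $\phi_\infty^j$ is independent of $j$, and passing $A = \sum_j \int U\phi_\infty^j$ to the limit produces the scalar identity $A\bigl(1 + 2\,\int U\psi_0 / \int U^2 \bigr) = 0$; the precise value of $\omega$ is designed exactly so that the bracket is nonzero (verifiable via pairing $L_0\psi_0 = U^2$ with auxiliary test functions and using $L_0 U = -U^2$), hence $A = 0$ and $\phi_\infty^j \equiv 0$ for all $j$.

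Finally, with $c_j^n \to 0$, $A \to 0$ and $\|h_n\|_* \to 0$, I would rewrite
\begin{equation*}
\phi_n = ((-\Delta)^s + 1)^{-1}\Bigl(2W\phi_n - 2\omega W^2\!\int W\phi_n + h_n + \sum_l c_l^n Z_l\Bigr),
\end{equation*}
and use the Green function $G$ for $(-\Delta)^s + 1$ from Lemma \ref{le2.2}, whose polynomial decay $G(x) = O(|x|^{-(1+2s)})$ is compatible with the weight $\rho$ precisely under the constraint $\mu \leq 1+2s$ built into the definition of $\|\cdot\|_*$, to construct a supersolution barrier showing $|\phi_n(x)| \leq o(1)\rho(x)$, contradicting the normalization. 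The most delicate step is the outer weighted estimate: unlike in the classical setting one cannot use exponential barriers, so the bound relies on carefully convolving $G$ against terms of the form $W\rho$ with matching polynomial tails. A secondary but important subtlety is verifying that $1 + 2\int U\psi_0/\int U^2 \neq 0$, which is the algebraic content encoded by the specific choice of $\omega$.
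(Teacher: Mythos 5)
Your strategy is sound and would work, but it is genuinely different from the paper's argument, and it leaves a key constant unverified which in fact has a clean closed form.

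The paper does not blow up. After showing $c_j^n\to 0$ exactly as you do, it kills the nonlocal term $\int W\phi_n$ in a single stroke by pairing the equation with the test function $Z=\tfrac1s x\cdot\nabla W+2W$: a scaling computation ($Z=\tfrac1s\partial_\lambda u_\lambda|_{\lambda=1}$ with $u_\lambda=\lambda^{2s}W(\lambda\cdot)$) gives $L_0Z=-2W+o(1)$, and together with the decomposition $\phi_n=a_nW+\psi_n$, $\psi_n\perp W$ (for which $L\psi_n=L_0\psi_n$), the pairing yields $a_n\langle LW,Z\rangle+\langle\psi_n,L_0Z\rangle=o(1)$. Since $\langle\psi_n,L_0Z\rangle=-2\langle W,\psi_n\rangle+o(1)=o(1)$ and $\langle LW,Z\rangle\to(2-\tfrac{2}{3s})\int W^3\neq 0$, one gets $a_n\to 0$, hence $\int W\phi_n\to 0$. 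This reduces the equation to the purely local problem $(-\Delta)^s\phi_n+(1-2W)\phi_n=o(1)$ in the $*$-norm, and the conclusion follows from the a priori estimate of Lemma~4.2 in D\'avila--del Pino--Wei \cite{ddw}. The advantage of the paper's route is that the nonlocal coupling is handled algebraically, at the price of the somewhat magical test function; your route is more systematic (blow-up, Liouville, barrier) but forces you to justify tail control when passing $\int W\phi_n$, the orthogonality, and $A=\sum_j\int U\phi_\infty^j$ to the limit with only polynomial decay available, and then to re-derive the outer barrier estimate that the paper simply cites from \cite{ddw}.

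One concrete item you flagged but did not settle: the non-vanishing of $1+2\int U\psi_0/\int U^2$, where $L_0\psi_0=U^2$, $\psi_0\perp U'$. This is not a delicate numerical fact. Since $(-\Delta)^sU+U=U^2$, one has $L_0U=U^2-2U^2=-U^2$, so $L_0(\psi_0+U)=0$; nondegeneracy gives $\psi_0+U=cU'$, and pairing with $U'$ (using $\int UU'=0=\int\psi_0U'$) forces $c=0$. Hence $\psi_0=-U$, $\int U\psi_0=-\int U^2$, and $1+2\int U\psi_0/\int U^2=-1\neq0$. With this filled in, and with the outer weighted estimate written out carefully (or reduced to the cited lemma), your argument closes.
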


\begin{proof}
We will argue by contradiction. Without loss of generality we can assume that $\|\phi_n\|_*=1.$ Our first observation is that $c_{j}^n\rightarrow0.$ Indeed, multiplying the equation by $Z_j$ and integrating by parts we get
\begin{equation*}
\langle\phi_n,L^*Z_j\rangle=c_j^n\int Z_j^2+\sum_{i\neq j}c_i^n\langle Z_i,Z_j\rangle+\langle h_n,Z_j\rangle.
\end{equation*}
Using Proposition \ref{pr4.1}, by rather standard calculations, it follows that $c_j^n\rightarrow0$ as $n\rightarrow\infty.$

Our next goal is to show
\begin{align*}
\int W\phi_n\rightarrow 0~\mathrm{as}~n\rightarrow\infty.
\end{align*}
To this end consider the test function
$$Z=\frac1sx\cdot\nabla W+2W$$
and
$$L_0=(-\Delta)^s+(1-2U)\mathrm{id}.$$
We claim that
$$L_0Z=-2W+o(1).$$
Indeed if we set $u_{\lambda}(x)=\lambda^{2s}W(\lambda x),$ then
\begin{equation*}
-(-\Delta)^{s} u_{\lambda}(x)=\lambda^{2s}u_{\lambda}-u_{\lambda}^2+\sum_{i\neq j}\lambda^{4s}U(\lambda x-\xi_i)U(\lambda x-\xi_j).
\end{equation*}
Since $Z=\frac{1}{s}\frac{\partial u_{\lambda}}{\partial\lambda}\mid_{\lambda=1}$ the claim follows now from Proposition \ref{pr4.1} and the above.

Decompose $\phi_n=a_nW+\psi_n$ where $\langle W,\psi_n\rangle=0.$ Then $L\psi_n=L_0\psi_n$ and we have
\begin{equation*}
o(1)=\langle L\phi_n,Z\rangle=a_n\langle LW,Z\rangle+\langle L_0\psi_n,Z\rangle.
\end{equation*}
But
\begin{equation*}
\langle L_0\psi_n,Z\rangle=\langle \psi_n,L_0Z\rangle=-2\langle W,\psi_n\rangle+o(1)=o(1)
\end{equation*}
and
\begin{equation*}
\langle LW,Z\rangle=\langle W^2,Z\rangle+o(1)=\frac{1}{3s}\int x\nabla W^3+2\int W^3+o(1)=(2-\frac{2}{3s})\int W^3+o(1).
\end{equation*}
It follows that $a_n\rightarrow0,$ and $\langle W,\phi_n\rangle=o(1)$. Going back to the equation satisfied by $\phi_n$, we see that
\begin{equation*}
(-\Delta)^s\phi_n+(1-2W)\phi_n=o(1)\Big(W^2+\sum_jZ_j\Big)+h_n\equiv g_n+h_n,
\end{equation*}
with $\|g_n\|_*\rightarrow0$. By applying the Lemma 4.2 in \cite{ddw}, we get
\begin{equation*}
\|\phi_n\|_*\leq C(\|g_n\|_*+\|h_n\|_*)=o(1)
\end{equation*}
which contradicts to the assumption $\|\phi_n\|_*=1.$ Hence we finish the proof of this lemma.
\end{proof}

\noindent {\bf{Remark:}} In the statement of Lemma 4.2 in \cite{ddw}, it requires that $\mu\in(\frac12,1+2s).$ However, we can go through the proof to get the same conclusion for the case $\mu=1+2s.$\\

Next we construct a solution to problem (\ref{4.3}). To do so, we consider the following auxiliary problem at first
\begin{equation}
\label{4.5}
\left\{\begin{array}{ll}
(-\Delta)^s\phi+\phi=g+\sum_{i=1}^kc_iZ_i,\\
\int_{\mathbb{R}^1}\phi Z_i=0~\mathrm{for~all}~i.
\end{array}\right.
\end{equation}

\begin{lemma}
\label{le4.2}
For each $g$ with $\|g\|_{*}<+\infty$, there exists a unique solution of problem (\ref{4.5}), $\phi=:\mathcal{A}[g]\in H^{2s}(\mathbb{R})$. This solution satisfies
\begin{align*}
\|\mathcal{A}[g]\|_{*}\leq C\|g\|_{*}.
\end{align*}
\end{lemma}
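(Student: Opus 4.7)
The plan is to obtain existence and uniqueness by a straightforward variational argument in the constrained Sobolev space, and then upgrade the resulting $L^2$-regularity to the weighted $L^\infty$-estimate via the Green's function representation of $(-\Delta)^s+I$.

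First I would set up existence. Since $\mu>\tfrac12$, the weight $\rho$ lies in $L^2(\mathbb{R})$, so the hypothesis $\|g\|_*<\infty$ forces $g\in L^2$. Consider the closed subspace $H:=\{\phi\in H^s(\mathbb{R}) : \langle\phi,Z_i\rangle=0,\ i=1,\ldots,k\}$ endowed with the bilinear form $a(\phi,\psi)=\int(-\Delta)^{s/2}\phi\,(-\Delta)^{s/2}\psi\,dx+\int\phi\psi\,dx$, which is precisely the $H^s$ inner product and is therefore continuous and coercive. By Riesz representation there is a unique $\phi\in H$ with $a(\phi,\psi)=\langle g,\psi\rangle$ for all $\psi\in H$. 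To read off the constants $c_i$ appearing in (4.5), test the putative equation against each $Z_j$, which yields the linear system $\sum_i c_i\langle Z_i,Z_j\rangle=a(\phi,Z_j)-\langle g,Z_j\rangle$. Because the points $q_i$ are well separated (one may assume $|q_i-q_j|$ large, as in Theorem 4.1), the Gram matrix $(\langle Z_i,Z_j\rangle)$ is diagonally dominant with diagonal entries bounded below by $\int|U'|^2>0$, hence invertible, and the $c_i$ are determined uniquely with $|c_i|\le C(\|g\|_*+\|\phi\|_{L^2})$. Standard elliptic regularity for $(-\Delta)^s+I$ upgrades $\phi$ to $H^{2s}$.

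For the pointwise bound I would convert to a convolution: $\phi=G_1*F$ with $F=g+\sum c_iZ_i$, where $G_1$ is the Green function from Lemma \ref{le2.2}, which is $L^1(\mathbb{R})$ and satisfies $G_1(x)=O(|x|^{-(1+2s)})$ as $|x|\to\infty$. It suffices to prove the kernel estimate
\begin{equation*}
\int_{\mathbb{R}} G_1(x-y)\,\frac{dy}{(1+|y-q_j|)^\mu}\ \le\ \frac{C}{(1+|x-q_j|)^\mu}
\end{equation*}
for each $j$ and each $\mu\in(\tfrac12,1+2s]$. This is handled by splitting the $y$-integral into the inner region $|y-x|<\tfrac12|x-q_j|$, where $|y-q_j|\sim|x-q_j|$ so the weight factors out and one uses $\|G_1\|_{L^1}<\infty$, and the outer region $|y-x|\ge\tfrac12|x-q_j|$, where $G_1(x-y)\lesssim|x-q_j|^{-(1+2s)}$ absorbs the $L^1$-integral of $(1+|y-q_j|)^{-\mu}$ (which is finite since $\mu>\tfrac12$, and grows at most polynomially for smaller $\mu$). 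Applying this to $g$ and to each $Z_i$ (whose size is controlled in the $*$-norm by $\|Z_i\|_*\le C$) and using $|c_i|\le C\|g\|_*$ from the variational estimate combined with the $L^2$-bound $\|\phi\|_{L^2}\le C\|g\|_{L^2}\le C\|g\|_*$, yields $\|\phi\|_*\le C\|g\|_*$.

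The main obstacle is the borderline exponent $\mu=1+2s$, which matches the decay rate of $G_1$ at infinity and is precisely the case flagged in the remark after Lemma \ref{le4.1} as not covered by \cite{ddw}. At this exponent the naive tail estimate produces a logarithmically divergent integral, so I would use the sharp asymptotic $G_1(x)\sim\gamma|x|^{-(1+2s)}$ from Section 2 together with a finer dyadic decomposition of the outer region, exploiting that the annulus $\{2^k|x-q_j|\le|y-x|<2^{k+1}|x-q_j|\}$ contributes at most $2^{-k(1+2s)}$ times the local average of the weight; summing the geometric series gives the required bound with a finite (though larger) constant. Once this borderline estimate is in place, the proof closes routinely.
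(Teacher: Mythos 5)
Your overall strategy is the right one, and it is essentially the argument one finds in D\'avila--del Pino--Wei \cite{ddw}, which the paper cites for this lemma without reproducing the proof: Riesz/Lax--Milgram on the constrained subspace for existence and uniqueness, an identification of the $c_i$ via the (asymptotically diagonal) Gram matrix of the $Z_i$, and then the weighted $L^\infty$ bound through the convolution with the Green function $G$ of $(-\Delta)^s+I$. The $H^{2s}$ regularity and uniqueness arguments are fine.

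There are, however, two inaccuracies in the kernel estimate. First, in the outer region $\{|y-x|\ge \tfrac12|x-q_j|\}$ you bound $G(x-y)$ by $|x-q_j|^{-(1+2s)}$ and then claim $\int(1+|y-q_j|)^{-\mu}\,dy$ is ``finite since $\mu>\tfrac12$''; in one dimension that integral is finite only for $\mu>1$, so as written the outer estimate does not close when $\mu\in(\tfrac12,1]$. The remedy is a further subdivision of the outer region: on $\{|y-q_j|<\tfrac12|x-q_j|\}$ use $G(x-y)\lesssim |x-q_j|^{-(1+2s)}$ together with $\int_{|z|<r/2}(1+|z|)^{-\mu}\,dz\lesssim 1+r^{1-\mu}$, giving a contribution $\lesssim r^{-(1+2s)}+r^{-(2s+\mu)}\lesssim r^{-\mu}$, and on the remaining set $\{|y-q_j|\ge\tfrac12|x-q_j|,\ |y-x|\ge\tfrac12|x-q_j|\}$ use $(1+|y-q_j|)^{-\mu}\lesssim r^{-\mu}$ and $\int_{|z|\ge r/2}G(z)\,dz\lesssim r^{-2s}$, giving $\lesssim r^{-(\mu+2s)}$. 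Second, your worry about a logarithmic divergence at $\mu=1+2s$ is misplaced: since both decay exponents $1+2s$ (for $G$) and $\mu$ (for $\rho$) exceed $1=n$, the convolution $(1+|\cdot|)^{-(1+2s)}\ast(1+|\cdot|)^{-\mu}$ in $\mathbb{R}$ decays exactly like $(1+|x|)^{-\min\{1+2s,\mu\}}=(1+|x|)^{-\mu}$ with no logarithmic correction; a log would occur only if one of the exponents equalled $1$. The endpoint $\mu=1+2s$ is borderline merely in the sense that the inequality $\min\{1+2s,\mu\}\ge\mu$ saturates there and would fail for $\mu>1+2s$. Your dyadic decomposition is harmless and would also work, but it is addressing a non-issue. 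Note also that the remark in the paper about pushing $\mu$ to $1+2s$ concerns Lemma 4.2 of \cite{ddw} (the a priori estimate used in Lemma \ref{le4.1}), not Lemma 4.3 of \cite{ddw}, which is the reference for the present statement.
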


For the proof of Lemma \ref{le4.2}, one can see \cite[Lemma 4.3]{ddw}.\\

\noindent{\em Proof of Theorem \ref{th4.1}:}
Let us solve problem (\ref{4.3}). Let $\mathcal{Y}$ be the Banach space
\begin{equation}
\label{4.6}
\mathcal{Y}:=\{\phi\in L^{\infty}(\mathbb{R}^1)\mid \|\phi\|_{\mathcal{Y}}:=\|\phi\|_{*}<\infty\}.
\end{equation}
Let $\mathcal{A}$ be the operator defined in Lemma \ref{le4.2}. Then we have a solution to (\ref{4.3}) if we can solve
\begin{equation}
\label{4.7}
\phi-\mathcal{A}[2W\phi]+2\mathcal{A}[\omega W^2\int W\phi]=\mathcal{A}[g],~\phi\in\mathcal{Y}.
\end{equation}
We claim that
\begin{align*}
\mathcal{B}[\phi]:=\mathcal{B}_1[\phi]+\mathcal{B}_2[\phi]=\mathcal{A}[2W\phi]+\mathcal{A}[-2\omega W^2\int W\phi]
\end{align*}
defines a compact operator in $\mathcal{Y}$.
At first, we show that $\mathcal{B}_2$ defines a compact operator in $\mathcal{Y}$. We begin to prove that $\mathcal{C}[\phi]=\omega W^2\int W\phi$ is a compact operator from $\mathcal{Y}$ to $\mathcal{Y}.$ Let us assume that $\phi_n$ is a bounded sequence in $\mathcal{Y}.$ By direct computations, we get $I(\phi_n)=\int W\phi_n$ is contained in a bounded set in $\mathbb{R}^1.$ Therefore, we can pick out subsequence of $I(\phi_n)$ converge, combined with the fact $W(x)\leq C\rho(x),$ we get $\mathcal{C}$ is a compact operator from $\mathcal{Y}$ to $\mathcal{Y}.$ By Lemma \ref{le4.2}, we have $\mathcal{A}$ is a continuous operator from $\mathcal{Y}$ to $\mathcal{Y}.$ Hence, we get $\mathcal{B}_2$ is compact.

Then, we come to consider the operator $\mathcal{B}_1$. We denote $f_n=\mathcal{A}[2W\phi_n]$ and claim that
\begin{align}
\label{4.8}
\sup_{x\neq y}\frac{|f_n(x)-f_n(y)|}{|x-y|^{\alpha}}\leq C\|\phi_n\|_{\infty},~\alpha=\min\{1,2s\}.
\end{align}
By Green representation, we have
\begin{align*}
f_n=\mathcal{A}[2W\phi_n]=2\int G(x,z)W\phi_n\mathrm{d}z,
\end{align*}
Since
\begin{equation*}
\|\mathcal{A}[2W\phi_n]\|_{L^{\infty}}=\|2\int G(x,z)(W\phi_n)(z)\mathrm{d}z\|_{L^{\infty}}\leq C\|\phi_n\|_{\infty},
\end{equation*}
we can obtain
\begin{align*}
\sup_{x\neq y}\frac{|f_n(x)-f_n(y)|}{|x-y|^{\alpha}}\leq C\|\phi_n\|_{\infty}
\end{align*}
for $|x-y|\geq\frac13.$ For $|x-y|<\frac13,$ we have
\begin{align*}
|f_n(x)-f_n(y)|\leq C\|\phi_n\|_{\infty}\int_{\mathbb{R}}|G(x-z)-G(y-z)|\mathrm{d}z.
\end{align*}
Now, we decompose
\begin{align*}
\int_{\mathbb{R}}|G(x-z)-G(y-z)|\mathrm{d}z=&
\int_{\mathbb{R}}|G(z+y-x)-G(z)|\mathrm{d}z\\
=&\int_{|z|>3|y-x|}|G(z+y-x)-G(z)|\mathrm{d}z\\
&+\int_{|z|\leq3|y-x|}|G(z+y-x)-G(z)|\mathrm{d}z.
\end{align*}
We have
\begin{align*}
\int_{|z|>3|y-x|}|G(z+y-x)-G(z)|\mathrm{d}z\leq
\int_0^1\mathrm{d}t\int_{|z|>3|y-x|}|\nabla G(z+t(y-x))|\mathrm{d}z|y-x|,
\end{align*}
and since $3|y-x|<1$,
\begin{align*}
\int_{|z|>3|y-x|}|\nabla G(z+t(y-x))|\mathrm{d}z&\leq C(1+\int_{1>|z|>3|y-x|}\frac{\mathrm{d}z}{|z|^{2-2s}})\\
&\leq\left\{\begin{array}{ll}
C(1+|y-x|^{2s-1}),~&s\in(\frac12,1),\\
C(1+\log|y-x|),~&s=\frac12.
\end{array}\right.
\end{align*}
On the other hand
\begin{align*}
\int_{|z|<3|y-x|}|G(z+y-x)-G(z)|\mathrm{d}z\leq~&2\int_{|z|<4|y-x|}|G(z)|\mathrm{d}z\\
\leq&
\left\{\begin{array}{ll}
C|x-y|,~&s\in(\frac12,1),\\
C|x-y|\log|x-y|,~&s=\frac12.
\end{array}\right.
\end{align*}
Hence, we get
\begin{align*}
\sup_{x\neq y}\frac{|f_n(x)-f_n(y)|}{|x-y|^{\alpha}}\leq C\|\phi_n\|_{\infty}.
\end{align*}
Therefore, (\ref{4.8}) is proved.

Using the Arzela-Ascoli
theorem, we can get a subsequence of $f_n$ which we relabel the same, that converges uniformly on compact sets to a continuous function $f$. Outside such a compact set, by using the property of the Green function and standard potential analysis, we have
\begin{align*}
f_n=\mathcal{A}[2W\phi_n]=\frac{\gamma_n}{|x|^{1+2s}}(1+o(1))~\mathrm{as}~|x|\rightarrow\infty,
\end{align*}
where $\gamma_n=2\gamma\int_{\mathbb{R}}W\phi_n$. Based on the previous subsequence we have chosen, we can further choose a subsequence such that $I(\phi_n)=\int_{\mathbb{R}}W\phi_n$ converges, combined with the fact $\rho(x)^{-1}|x|^{-2s-1}\rightarrow k$ as $x\rightarrow\infty$, we can get a subsequence of $f_n$ converges in $\mathcal{Y}$ outside a compact set. This implies that $\mathcal{B}_1$ is compact. Therefore we prove the claim that $\mathcal{B}$ defines a compact operator in $\mathcal{Y}.$

Finally, a priori estimate tells us that for $g=0$, equation (\ref{4.3}) admits only the trivial solution. The desired result of Theorem \ref{th4.1} follows at once from Fredholm's alternative.\quad\quad\quad\quad\quad\quad\quad\quad\quad\quad\quad\quad\quad\quad\quad\quad
\quad\quad\quad\quad\quad\quad\quad\quad\quad\quad\quad\quad\quad\quad$\square$

\vspace{1cm}
\section{Basic estimates}

In this section we calculate basic estimates, including error estimates.
Our first task  is to analyze  the solution $V$ of the problem
\begin{equation*}
(-\Delta)^sV+\varepsilon^{2s}V=\tau_{\varepsilon}\Big[\sum_{i=1}^kU(|x-q_i|)\Big]^2,\quad V(x)\rightarrow0~\mathrm{as}~|x|\rightarrow\infty,
\end{equation*}
where $\tau_{\varepsilon}$ is given in (\ref{3.2}). Denote by $Z_0$ the solution of
\begin{equation*}
(-\Delta)^sZ_0+\varepsilon^{2s} Z_0=U(|x|)^2,\quad Z_0(x)\rightarrow0~\mathrm{as}~|x|\rightarrow\infty,
\end{equation*}
and by $\theta_{ij}(x),~i\neq j$, that of
\begin{equation}
\label{5.1}
(-\Delta)^s\theta_{ij}+\varepsilon^{2s}\theta_{ij}=U(|x-q_i|)U(|x-q_j|),\quad
\theta_{ij}(x)\rightarrow0~\mathrm{as}~|x|\rightarrow\infty.
\end{equation}
Then we have
\begin{equation*}
V(x)=\tau_{\varepsilon}\sum_{i=1}^kZ_0(|x-q_i|)+\tau_{\varepsilon}\sum_{i\neq j}\theta_{ij}(x).
\end{equation*}
We will now study $Z_0(|x|)$ in the range
$$x\in I_s:=[-M\varepsilon^{\frac{1-2s}{4s}},M\varepsilon^{\frac{1-2s}{4s}}]$$
for $s\in(\frac12,1)$ and in the range
$$x\in I_{\frac12}:=[-M(-\log\varepsilon)^{\frac12},M(-\log\varepsilon)^{\frac12}]$$
for $s=\frac12.$ Here $M=\frac{100}{\eta}.$

By Green representation, we have $Z_0(|x|)=\int G_{\varepsilon}(|x-y|)U(|y|)^2\mathrm{d}y.$ We can expand $Z_0(|x|)$ as
\begin{equation}
\label{5.2}
Z_0(|x|)=\left\{\begin{array}{ll}
\int\big(\mathfrak{a}_0\varepsilon^{1-2s}+\mathfrak{a}_1|x-y|^{2s-1}\big)U(|y|)^2+O(\varepsilon^{\min\{2+\frac{1}{2s}-2s,
\frac32-\frac{1}{4s}\}}),~&s\in(\frac12,1),\\
-\frac{1}{\pi}\int\log(\varepsilon|x-y|)U(|y|)^2+\mathfrak{a}_2\int U^2+O(\varepsilon(-\log\varepsilon)^{\frac12}),~&s=\frac12.
\end{array}\right.
\end{equation}

Let us consider the quantity $H_s(x)=\mathfrak{a}_1\int|x-y|^{2s-1}U(|y|)^2$ for $s\in(\frac12,1)$, by noting that $U(|y|)$ is an even function, we can easily obtain
\begin{equation*}
H_s(-x)=\mathfrak{a}_1\int|-x-y|^{2s-1}U(|y|)^2=\mathfrak{a}_1\int|-x+y|^{2s-1}U(|y|)^2=H_s(x).
\end{equation*}
Furthermore, we can write
\begin{equation*}
H_s(|x|)=\mathfrak{a}_1|x|^{2s-1}\int U(|y|)^2+f_s(|x|),
\end{equation*}
where $f_s$ and its first derivative are uniformly bounded. Similarly, we can get
\begin{equation*}
H_{\frac12}(|x|)=-\frac{1}{\pi}\log|x|\int U(|y|)^2+f_{\frac12}(|x|),
\end{equation*}
where $f_{\frac12}$ and its first derivative are uniformly bounded.

Let us now consider the function $\theta_{ij}$ given in (\ref{5.1}). Since $\theta_{ij}$ can be represented as
\begin{equation*}
\theta_{ij}=\int G_{\varepsilon}(|x-y|)U(y-q_i)U(y-q_j).
\end{equation*}
At first, we consider the case for $s\in(\frac12,1)$. For $x\in I_s,$ the following expansion holds,
\begin{align*}
\theta_{ij}(x)=&\mathfrak{a}_0\varepsilon^{1-2s}\int U(y-q_i)U(y-q_j)+\mathfrak{a}_1\int |x-y|^{2s-1}U(y-q_i)U(y-q_j)\\
&+O(\varepsilon^{\frac12})\int U(y-q_i)U(y-q_j).
\end{align*}
Using Proposition \ref{pr4.1} one can show that
\begin{align*}
\int |x-y|^{2s-1}U(y-q_i)U(y-q_j)=O(\varepsilon^{\frac{2s-1}{2s}}),
\end{align*}
uniformly in $x\in I_s,$ a similar estimate holds for the derivative of the above expression with respect to $x$. Let us set
\begin{equation}
\label{5.3}
\delta_{s}(|z|)=\int U(y)U(y-z)\mathrm{d}y.
\end{equation}
Following a standard potential analysis, we have
\begin{equation}
\label{5.4}
\delta_{s}(|z|)=O(1)\frac{1}{|z|^{2s+1}+1}.
\end{equation}

For the case $s=\frac12$, we can expand the term $\theta_{ij}(x)$ as follows
\begin{align*}
\theta_{ij}(x)=&-\frac{1}{\pi}\int\log(\varepsilon|x-y|)U(y-q_i)U(y-q_j)\\
&+\mathfrak{a}_2\int U(y-q_i)U(y-q_j)+O(\varepsilon|\log\varepsilon|^{\frac12}).
\end{align*}
Using Proposition \ref{pr4.1} we can obtain that there is a $\delta>0$ and close to $0$ such that
\begin{align*}
\int(\mathfrak{a}_2-\log|x-y|)U(y-q_i)U(y-q_j)=O(\frac{1}{(\log\frac{1}{\varepsilon})^{1-\delta}}),
\end{align*}
uniformly for $x\in I_{\frac12}$, a similar estimate holds for the derivative of the above expression with respect to $x$. Let us set
\begin{equation}
\label{5.5}
\delta_{\frac12}(|z|)=\int U(y)U(y-z)\mathrm{d}y.
\end{equation}
Following a standard potential analysis, we have
\begin{equation}
\label{5.6}
\delta_{\frac12}(|z|)=O(1)\frac{1}{1+|z|^2}.
\end{equation}

Thus, combing the above estimates we obtain:
\begin{lemma}
\label{le5.1}
For the term $V$, we have
\begin{itemize}
  \item [(a)] When $s\in(\frac12,1)$, the following estimate holds for $x\in I_s,$
  \begin{align*}
  V(x)=1+\tau_{\varepsilon}\sum_{i=1}^kH_s(|x-q_i|)+c_s\sum_{i\neq j}\delta_s(|q_i-q_j|)+O(\varepsilon^{2s-1}).
  \end{align*}

  \noindent When $s=\frac12$, the following estimate holds for $x\in I_{\frac12},$
  \begin{align*}
  V(x)=1+\tau_{\varepsilon}\sum_{i=1}^kH_{\frac12}(|x-q_i|)+c_{\frac12}\sum_{i\neq j}\delta_{\frac12}(|q_i-q_j|)+O(\frac{1}{-\log\varepsilon}).
  \end{align*}
  Here $c_s, c_{\frac12}$ are some generic constants independent of $\varepsilon$\\

  A similar estimate holds for the derivatives of $V$ with respect to $x$. The function $H_s(|x|)$ is given in the above discussion and, for $|x|>1$, has the expansion
  \begin{align*}
  H(|x|)=\left\{\begin{array}{ll}
  \mathfrak{a}_1|x|^{2s-1}\int U(|y|)^2+f_s(|x|),~&s\in(0,\frac12),\\
  -\frac{1}{\pi}\log|x|\int U(|y|)^2+f_{\frac12}(|x|),~&s=\frac12.
  \end{array}\right.
  \end{align*}
  The function $\delta_s$ is given in (\ref{5.4}) and (\ref{5.6}) for the cases $s\in(\frac12,1)$ and $s=\frac12$ respectively.
  \item [(b)] When $s\in(\frac12,1)$ for $x\in \mathbb{R}\setminus I_s,$ then we have the following lower estimate
  \begin{equation}
  \label{5.7}
  V(x)\geq C\frac{1}{1+(\varepsilon|x|)^{2s+1}}.
  \end{equation}
  When $s=\frac12,$ for $x\in\mathbb{R}\setminus I_{\frac12},$ then we have the following lower estimate
  \begin{equation}
  \label{5.8}
  V(x)\geq C\frac{1}{1+(\varepsilon|x|)^{2}}.
  \end{equation}
\end{itemize}
\end{lemma}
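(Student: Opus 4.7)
The strategy is a direct computation based on the decomposition $V(x)=\tau_\varepsilon\sum_i Z_0(|x-q_i|)+\tau_\varepsilon\sum_{i\neq j}\theta_{ij}(x)$ recorded just before the lemma, combined with the local/global asymptotics of the Green function $G_\varepsilon$ of $(-\Delta)^s+\varepsilon^{2s}$ on $\mathbb{R}$. The bridge to Lemma \ref{le2.2} is the rescaling identity $G_\varepsilon(x)=\varepsilon^{1-2s}G(\varepsilon x)$, which is immediate from applying $(-\Delta)^s+\varepsilon^{2s}$ to the right-hand side and using the homogeneity of $(-\Delta)^s$ and of the Dirac mass. Feeding the expansion of $G$ near the origin from Lemma \ref{le2.2} through this scaling reproduces the expansion of $G_\varepsilon$ already written in the discussion above, valid whenever $\varepsilon|x|=o(1)$, which is the case throughout $I_s$ and $I_{\frac12}$.

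For part (a) I would substitute this local expansion into the convolutions defining $Z_0$ and $\theta_{ij}$. The crucial algebraic cancellation is that the constant term $\mathfrak{a}_0\varepsilon^{1-2s}\int U^2$ (respectively $-\tfrac{1}{\pi}(\log\varepsilon)\int U^2$ when $s=\tfrac12$), multiplied by $\tau_\varepsilon$ and summed over the $k$ bumps, equals exactly $1$ by the very definition (\ref{3.2}) of $\tau_\varepsilon$; this is the whole point of the normalization and produces the leading $1$ in the statement. The next-order $\mathfrak{a}_1|x-y|^{2s-1}$ piece integrated against $U(|y|)^2$ yields the function $H_s$ as written, using the evenness of $U$ already noted. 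For the cross terms $\theta_{ij}$, the same constant-term computation gives the contribution $c_s\delta_s(|q_i-q_j|)$ with $c_s=1/(k\int U^2)$, while the $|x-y|^{2s-1}$ piece of $G_\varepsilon$, integrated against $U(y-q_i)U(y-q_j)$, is absorbed into the error by the uniform estimate $\int|x-y|^{2s-1}U(y-q_i)U(y-q_j)\,dy=O(\varepsilon^{(2s-1)/2s})$ already used in the paragraph preceding the lemma. The case $s=\tfrac12$ proceeds in exactly parallel fashion, with logarithms in place of powers.

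For part (b) I would retain only the $Z_0$ contribution in $V$ (both $Z_0$ and $\theta_{ij}$ are positive) and use the large-distance asymptotic $G(y)\sim\gamma|y|^{-(1+2s)}$ recalled before Lemma \ref{le2.2}. Rescaling gives $G_\varepsilon(x)\geq c\varepsilon^{1-2s}/(1+(\varepsilon|x|)^{1+2s})$ for all $x$. For $x\notin I_s$ each $q_i\in I_s$ satisfies $|x-q_i|\asymp|x|$, hence
\[
V(x)\;\geq\;\tau_\varepsilon\sum_{i=1}^k\int G_\varepsilon(|x-q_i-y|)U(|y|)^2\,dy\;\geq\;C\tau_\varepsilon k\varepsilon^{1-2s}\,\frac{\int U^2}{1+(\varepsilon|x|)^{1+2s}}\;=\;\frac{C}{1+(\varepsilon|x|)^{1+2s}},
\]
where the final equality again uses (\ref{3.2}). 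The case $s=\tfrac12$ is analogous with a logarithmic factor.

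The main obstacle is the error bookkeeping in part (a): the intrinsic remainder $O(|x|^{\min\{2,4s-1\}})$ in Lemma \ref{le2.2}, the tail of the convolution where $|y|$ is not small, and the middle-order cross-term contribution each have to be combined into the stated $O(\varepsilon^{2s-1})$ remainder (respectively $O(1/|\log\varepsilon|)$ when $s=\tfrac12$), uniformly over the inner region. This is routine given the splitting of integrals into near- and far-diagonal parts as in the paragraphs before the lemma, but care is required because both $\tau_\varepsilon H_s$ and $c_s\delta_s(|q_i-q_j|)$ are of strictly larger order than the claimed error and must not be absorbed into it.
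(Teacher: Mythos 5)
Your part (a) argument follows the paper's route essentially verbatim: decompose $V=\tau_\varepsilon\sum_i Z_0(|x-q_i|)+\tau_\varepsilon\sum_{i\neq j}\theta_{ij}$, substitute the local expansion of $G_\varepsilon$, and observe that the normalization of $\tau_\varepsilon$ in (\ref{3.2}) is rigged so that $\tau_\varepsilon\cdot k\cdot\mathfrak{a}_0\varepsilon^{1-2s}\int U^2=1$ (resp.\ $\tau_\varepsilon\cdot k\cdot\frac{1}{\pi}\log\frac1\varepsilon\int U^2=1$). That cancellation is indeed the whole point, your identification $c_s=1/(k\int U^2)$ is correct, and the rescaling identity $G_\varepsilon(x)=\varepsilon^{1-2s}G(\varepsilon x)$ is a clean way to derive the $G_\varepsilon$ expansion that the paper instead states directly in Section 3. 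The error bookkeeping you defer is exactly the content of (\ref{5.2}) and the estimate $\int|x-y|^{2s-1}U(y-q_i)U(y-q_j)=O(\varepsilon^{(2s-1)/(2s)})$, both of which multiply by $\tau_\varepsilon\sim\varepsilon^{2s-1}$ to produce terms $o(\varepsilon^{2s-1})$, so the claimed remainder is actually dominated by $\tau_\varepsilon f_s=O(\varepsilon^{2s-1})$; this is consistent with your comment that $\tau_\varepsilon H_s$ and $c_s\delta_s$ must be kept. Part (a) is fine.

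For part (b) you should be more careful about $s=\tfrac12$. Your argument for $s\in(\tfrac12,1)$ is sound: $G_\varepsilon(z)\geq c\varepsilon^{1-2s}/(1+(\varepsilon|z|)^{1+2s})$ together with $\tau_\varepsilon k\varepsilon^{1-2s}\int U^2=O(1)$ gives (\ref{5.7}). But for $s=\tfrac12$ the same crude bound $G_\varepsilon(z)\geq c/(1+(\varepsilon|z|)^{2})$ combined with $\tau_\varepsilon k\int U^2=\pi/\log\frac1\varepsilon$ only yields $V(x)\gtrsim\big(\log\tfrac1\varepsilon\big)^{-1}(1+(\varepsilon|x|)^2)^{-1}$, which is \emph{weaker} than the stated (\ref{5.8}) by a full factor of $\log\frac1\varepsilon$; saying the case ``is analogous with a logarithmic factor'' does not resolve this. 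In the transitional range $(\log\frac1\varepsilon)^{1/2}\lesssim|x|\lesssim\varepsilon^{-1}$ the crude lower bound on $G_\varepsilon$ throws away the logarithmic enhancement $G_\varepsilon(z)\sim\frac1\pi\log\frac{1}{\varepsilon|z|}$, which is precisely what compensates the $(\log\frac1\varepsilon)^{-1}$ in $\tau_\varepsilon$; you need to retain it to get a bound of the claimed strength on that range. Even so, for $\varepsilon|x|\gtrsim1$ the leading behavior is $V(x)\asymp\tau_\varepsilon k\,G(\varepsilon x)\int U^2\asymp(\log\tfrac1\varepsilon)^{-1}(\varepsilon|x|)^{-2}$, so (\ref{5.8}) as written appears to overstate the bound there; you should track how the extra $\log\frac1\varepsilon$ propagates into Lemma \ref{le6.1} rather than quoting (\ref{5.8}) at face value.
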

Estimate (\ref{5.7}) and (\ref{5.8}) can be proven by using the potential analysis.

\vspace{1cm}
\section{further estimates}
For brevity we shall denote $U_i(x)=U(x-q_i)$ and $W=\sum_{i=1}^{k}U_i$. Our purpose in this section is to derive estimates for the quantity
\begin{equation}
\label{6.1}
S(W)\equiv -(-\Delta)^sW-W+\frac{W^2}{V},
\end{equation}
which can be written as
\begin{equation*}
S(W)=\frac{W^2}{V}-\sum_{i=1}^{k}U_i^2.
\end{equation*}
Our first result is the following
\begin{lemma}
\label{le6.1}
Let the number $\mu=1+2s$ in the definition of $\|\cdot\|_*$. For all points $q_i$ satisfying conditions in (\ref{3.4})
and all sufficiently small $\varepsilon$ we have
\begin{equation*}
\|S(W)\|_*\leq
\left\{\begin{array}{lll}
C\varepsilon^{s-\frac{1}{4s}},~&s\in(\frac12,1),\\
C(-\frac{1}{\log\varepsilon})^{1-\delta},~&s=\frac12,
\end{array}\right.
\end{equation*}
where $C$ is some constant independent of $\varepsilon$ and $\delta$ is any small positive number independent of $\varepsilon.$
\end{lemma}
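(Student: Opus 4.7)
The plan is to decompose
\[
S(W) = \frac{W^2}{V} - \sum_i U_i^2 = \sum_i U_i^2 \cdot \frac{1-V}{V} + \frac{1}{V}\sum_{i\neq j} U_iU_j,
\]
using $W^2 = \sum_i U_i^2 + \sum_{i\neq j} U_iU_j$. The first piece measures the deviation of $V$ from the normalizing constant $1$ near the bumps, while the second piece is the bump-bump interaction. Both will be estimated pointwise against the weight $\rho(x) = \sum_j (1+|x-q_j|)^{-(1+2s)}$, splitting at the boundary $\partial I_s$ between the inner region, where Lemma \ref{le5.1}(a) provides an expansion of $V$, and the outer region, where Lemma \ref{le5.1}(b) provides a lower bound on $V$.

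In the inner region $I_s$, the crucial exponent identity is $(2s-1)(2s+1)/(4s) = s - 1/(4s)$. With $\tau_\varepsilon\sim\varepsilon^{2s-1}$ and $|q_i-q_j|, |x-q_i|\lesssim \varepsilon^{(1-2s)/(4s)}$, both $\tau_\varepsilon H_s(|x-q_j|)$ (bounded by $\tau_\varepsilon |x-q_j|^{2s-1}$) and $\delta_s(|q_i-q_j|) \sim |q_i-q_j|^{-(1+2s)}$ evaluate to order $\varepsilon^{s-1/(4s)}$. Lemma \ref{le5.1}(a) then yields $|1-V|/V = O(\varepsilon^{s-1/(4s)})$ throughout $I_s$, and since $\mu=1+2s$ matches the decay rate of $U$ we have $U_i^2(x) \leq C\rho(x)$, giving $\rho(x)^{-1} U_i^2(x)|1-V(x)|/V(x) = O(\varepsilon^{s-1/(4s)})$. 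For the interaction term, the key geometric observation is that for any $x$ and any pair $i\neq j$, at least one of $|x-q_i|, |x-q_j|$ exceeds $|q_i-q_j|/2$, so one of the factors $U_i(x), U_j(x)$ is bounded by $C\varepsilon^{s-1/(4s)}$ while the other is absorbed into $\rho(x)$.

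Outside $I_s$, the choice $M=100/\eta$ together with the constraint $|q_i|\leq (1/\eta)\varepsilon^{(1-2s)/(4s)}$ forces $|x-q_i|\sim |x|$ for all $i$, so that $\rho(x)\sim |x|^{-(1+2s)}$ and $W(x)\lesssim |x|^{-(1+2s)}$. Using the lower bound $V(x)\geq C(1+(\varepsilon|x|)^{2s+1})^{-1}$, one obtains
\[
\rho(x)^{-1}\frac{W^2(x)}{V(x)} \lesssim |x|^{-(1+2s)} + \varepsilon^{2s+1}, \qquad \rho(x)^{-1}\sum_i U_i^2(x) \lesssim |x|^{-(1+2s)}.
\]
At the inner boundary $|x|\sim \varepsilon^{(1-2s)/(4s)}$ these reproduce exactly the rate $\varepsilon^{s-1/(4s)}$, and they decay further out. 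The case $s=1/2$ follows the same blueprint with $\tau_\varepsilon\sim 1/\log(1/\varepsilon)$, logarithmic growth of $H_{1/2}$, and a harmless $\log\log$ factor absorbed into the $\delta$ in $(-\log\varepsilon)^{-(1-\delta)}$.

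The main obstacle is not any one estimate but the coordination of scales: one has to verify that $|1-V|$ in the inner region, the interaction $\sum_{i\neq j} U_iU_j$, and the outer-region decay all produce the single rate $\varepsilon^{s-1/(4s)}$. Once the algebraic identity $(2s-1)(2s+1)/(4s) = s - 1/(4s)$ is isolated and matched against the separation scale $\varepsilon^{(1-2s)/(4s)}$ prescribed by $Q_{s,\eta}$, the pointwise estimates against $\rho$ reduce to routine bookkeeping.
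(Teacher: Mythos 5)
Your proposal is correct and follows essentially the same route as the paper: the same decomposition $S(W)=\frac{1-V}{V}\sum_i U_i^2 + V^{-1}\sum_{i\neq j}U_iU_j$, the same inner estimate $|1-V|=O(\varepsilon^{s-1/(4s)})$ from Lemma \ref{le5.1}(a) together with the exponent identity $(2s-1)(2s+1)/(4s)=s-1/(4s)$ and the separation scale from $Q_{s,\eta}$, the same pairwise estimate $U_iU_j\lesssim |q_i-q_j|^{-(1+2s)}\rho(x)$, and the same outer-region bound via Lemma \ref{le5.1}(b) with $|x-q_i|\sim|x|$. No meaningful difference from the paper's argument.
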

\begin{proof}
Let us assume first $x\in I_s.$ We write
\begin{equation}
\label{6.2}
S(W)=\frac{1-V}{V}\sum_{i=1}^kU_i^2+2V^{-1}\sum_{i\neq j}U_iU_j=J_1+J_2.
\end{equation}
At first, it is easy to see in the region under consideration, we have
\begin{equation*}
V(x)=\left\{\begin{array}{ll}
1+O(\varepsilon^{s-\frac{1}{4s}}),~&s\in(\frac12,1),\\
1+O(\frac{1}{|\log\varepsilon|^{1-\delta}}),~&s=\frac12,
\end{array}\right.
\end{equation*}
where $\delta$ is any small positive number. Hence,
\begin{equation*}
J_1=\left\{\begin{array}{ll}
O(\varepsilon^{s-\frac{1}{4s}})(\sum_{i=1}^kU_i^2),~&s\in(\frac12,1),\\
O(\frac{1}{|\log\varepsilon|^{1-\delta}})(\sum_{i=1}^kU_i^2),~&s=\frac12.
\end{array}\right.
\end{equation*}
On the other hand,
\begin{align*}
V^{-1}U_iU_j\leq 2U_iU_j&\leq C\frac{1}{|q_i-q_j|^{2s+1}}\Big(\frac{1}{(1+|x-q_i|)^{2s+1}}+\frac{1}{(1+|x-q_j|)^{2s+1}}\Big)\\
&=O(1)\frac{1}{|q_i-q_j|^{2s+1}}\rho(x).
\end{align*}
Hence
\begin{equation*}
J_2=\left\{\begin{array}{ll}
O(\varepsilon^{s-\frac{1}{4s}})\rho(x),~&s\in(\frac12,1),\\
O(\frac{1}{|\log\varepsilon|})\rho(x),~&s=\frac12.
\end{array}\right.
\end{equation*}
As a conclusion, for $x\in I_s$, we have
\begin{align}
\label{6.3}
|S(W)|=\left\{\begin{array}{ll}
O(\varepsilon^{s-\frac{1}{4s}})\rho(x),~&s\in(\frac12,1),\\
O(\frac{1}{|\log\varepsilon|^{1-\delta}})\rho(x),~&s=\frac12,
\end{array}\right.
\end{align}
for small $\varepsilon.$

Outside the above region, let us consider the case $s\in(\frac12,1)$ first. Assume now $x\in\mathbb{R}\setminus I_s.$ By using Lemma \ref{le5.1}, we have
\begin{align}
\label{6.4}
|S(W)|\leq&~C\big(\varepsilon^{2s+1}|x|^{2s+1}+1\big)\sum_{i=1}^{k}U_i^2
\leq~C\sum_{i=1}^k\frac{\varepsilon^{2s+1}|x|^{2s+1}+1}{(1+|x-q_i|)^{1+2s}}\rho(x)\nonumber\\
=&~O(\varepsilon^{s-\frac{1}{4s}})\rho(x).
\end{align}
For the case $s=\frac12.$ Assume $x\in\mathbb{R}\setminus I_{\frac12}$, we get
\begin{align}
\label{6.5}
|S(W)|\leq&~C\big(\varepsilon^{2}|x|^{2}+1\big)\sum_{i=1}^{k}U_i^2
\leq ~C\sum_{i=1}^k\frac{\varepsilon^{2}|x|^{2}+1}{(1+|x-q_i|)^{2}}\rho(x)\nonumber\\
=&~O(-\frac{1}{\log\varepsilon})\rho(x)
\end{align}
Combining relations (\ref{6.3}), (\ref{6.4}) and (\ref{6.5}), we prove the lemma.
\end{proof}

Another quantity which will be crucial for the remaining arguments is
\begin{align}
\label{6.6}
\mathcal{I}=\int S(W)Z_{i}.
\end{align}
We shall consider $i=1$ only, since the other cases are similar. Observe that $\frac{\partial U(x-q_1)}{\partial q_1}=-\frac{\partial U(x-q_1)}{\partial x}$ and thus we have
\begin{align*}
-\mathcal{I}=&\int(1-V)V^{-1}\sum_{i=1}^kU_i^2\frac{\partial U}{\partial x}(x-q_1)+\int V^{-1}\sum_{i\neq j}U_iU_j\frac{\partial U}{\partial x}(x-q_1)=I_1+I_2.
\end{align*}
We will estimate $I_1$ and $I_2$ separately. In fact, we will find the following expansions
\begin{align}
\label{6.7}
I_2=-\alpha_s\frac{\partial}{\partial q_1}\sum_{j\neq 1}U(q_j-q_1)(1+o(1)),
\end{align}
and
\begin{align}
\label{6.8}
I_1=\left\{\begin{array}{ll}
-\beta_{s}\varepsilon^{2s-1}\frac{\partial}{\partial q_1}\sum_{j\neq 1}|q_j-q_1|^{2s-1}(1+o(1)),~&s\in(\frac12,1),\\
-\beta_{\frac12}(-\frac{1}{\log\varepsilon})\frac{\partial}{\partial q_1}\sum_{j\neq1}\log|q_j-q_1|(1+o(1)),~&s=\frac12.
\end{array}\right.
\end{align}
Here $\alpha_s,\beta_s,~s\in[\frac12,1)$ are some universal positive constants which are independent of $\varepsilon$.

We will establish (\ref{6.7}) at first. Using Lemma \ref{le5.1} we obtain
\begin{align*}
\int V^{-1}\sum_{i\neq j}U_iU_j\frac{\partial U_1}{\partial x}=\int\sum_{i\neq j}U_iU_j\frac{\partial U_1}{\partial x}(1+o(1)).
\end{align*}
Let us estimate $\int U_iU_j\frac{\partial U_1}{\partial x}$ for $i\neq j$, we observe that if $i,j\neq1,$ then
\begin{align*}
\int U_iU_j\frac{\partial U_1}{\partial x}=O\big((|q_i-q_1||q_j-q_1|)^{-(1+2s)}\big).
\end{align*}
On the other hand, if $i=1,j\neq1$ we get
\begin{align*}
\int U_1U_j\frac{\partial U_1}{\partial x}=-\frac12\frac{\partial}{\partial q_1}\int U^2(x-q_1)U(x-q_j)
\end{align*}
For the right hand side of the above equality, by standard potential analysis, we can get that for a certain universal constant $\mathfrak{c}_s>0$ such that
\begin{align*}
\int U^2(x-q_1)U(x-q_j)=\mathfrak{c}_sU(q_1-q_j)(1+o(1)),
\end{align*}
with a similar estimate for its derivative. We leave the detail in the appendix. Hence,
\begin{align*}
\int V^{-1}\sum_{i\neq j}U_iU_j\frac{\partial U}{\partial x}(x-q_1)=-\frac12\mathfrak{c}_s\frac{\partial }{\partial q_1}\sum_{j\neq 1}U(q_1-q_j)(1+o(1)),
\end{align*}
and estimates (\ref{6.7}) thus follows.

Let us now consider the term $I_1.$ Following a similar procedure, we get
\begin{align*}
I_1=&-\int\sum_{i=1}^{k}U(x-q_i)^2\Big\{\sum_{j=1}^{k}\tau_{\varepsilon} H_s(|x-q_j|)+c_s\sum_{j\neq l}\delta_s(|q_j-q_l|)\Big\}\\
&\times\frac{\partial}{\partial x}U(x-q_1)(1+o(1)),
\end{align*}
where $H_s$ and $\delta_s$ are given in Lemma \ref{le5.1}. Let us first estimate
\begin{align*}
g_{ijl}=\int U(x-q_i)^2\delta_s(|q_j-q_l|)\frac{\partial}{\partial x}U(x-q_1)\mathrm{d}x,
\end{align*}
with $j\neq l$. For $i=1$ this term is zero, while for $i\neq1$ we can estimate, using (\ref{5.4}), (\ref{5.6}) and Lemma \ref{le5.1},
\begin{align*}
|g_{ijl}|\leq~C|q_j-q_l|^{-(2s+1)}(1+|q_i-q_1|)^{-(2s+2)}.
\end{align*}
Then we come to consider the terms
\begin{align*}
I_{ij}=\int U(x-q_i)^2H_s(|x-q_j|)\frac{\partial U}{\partial x}(x-q_1).
\end{align*}
First we observe that the term corresponding to $i=j=1$ vanishes, by symmetry. If $i$ and $j$ are different, and both different from $1$, then the resulting term is of lower order, more precisely
\begin{align*}
I_{ij}=&\int U(x-q_i)^2H_s(x-q_j)\frac{\partial}{\partial x}U(x-q_1)\\
=&-\frac{\partial}{\partial q_1}\int U(x-q_i)^2H_s(x-q_j)U(x-q_1)\\
=&\left\{\begin{array}{ll}
O(1)\frac{\partial}{\partial q_1}\big(|q_i-q_j|^{2s-1}U(q_i-q_1)\big),
~&s\in(\frac12,1),\\
O(1)\frac{\partial}{\partial q_1}\big(\log|q_i-q_j|U(q_i-q_1)\big),
~&s=\frac12.
\end{array}\right.
\end{align*}
On the other hand, if $i=1,$
\begin{align*}
I_{1j}=&-\frac13\frac{\partial}{\partial q_1}\int U(x)^3H_s(|x-(q_j-q_1)|)\mathrm{d}x\\
=&\left\{\begin{array}{ll}
\mathfrak{b}_s\frac{\partial}{\partial q_1} |q_1-q_j|^{2s-1}(1+o(1)),~&s\in(\frac12,1),\\
\mathfrak{b}_{\frac12}\frac{\partial}{\partial q_1}\log|q_1-q_j|(1+o(1)),~&s=\frac12,
\end{array}\right.
\end{align*}
where $\mathfrak{b}_s$ are some generic constants and we used
\begin{equation*}
H_s(x)=
\left\{\begin{array}{ll}
\mathfrak{a}_0\int U^2(y)\mathrm{d}y|x|^{2s-1}(1+o(1)),~&s\in(\frac12,1),\\
-\frac{1}{\pi}\int U^2(y)\mathrm{d}y\log|x|(1+o(1)),~&s=\frac12,
\end{array}\right.
\end{equation*}
provided $x$ is sufficiently large. We put the detail in the appendix. Now, as for $I_{i1}$, we get
\begin{align*}
I_{i1}=\int U(x-(q_i-q_1))^2H_s(|x|)\frac{\partial U}{\partial x}(x)=O(|q_i-q_1|^{-3}).
\end{align*}
Combining the above estimates, we immediately get (\ref{6.8}).

Hence we have found that
\begin{align*}
\int S(W)Z_1=
\left\{\begin{array}{ll}
\sum_{j\neq 1}\frac{\partial}{\partial q_1}\Big(\alpha_{s}U(q_j-q_1)+\beta_s\varepsilon^{2s-1}|q_j-q_1|^{2s-1}\Big)(1+o(1)),~&s\in(\frac12,1),\\
\sum_{j\neq 1}\frac{\partial}{\partial q_1}\Big(\alpha_{\frac12}U(q_j-q_1)+\beta_{\frac12}\frac{\log|q_j-q_1|}{\log\frac{1}{\varepsilon}}\Big)(1+o(1)),
~&s=\frac12.
\end{array}\right.
\end{align*}
Thus, we obtain the following result:
\begin{lemma}
\label{le6.2}
For all points $(q_1,q_2,\cdots,q_k)$ satisfies (\ref{3.4}). If $s\in(\frac12,1),$
\begin{align*}
\int S(W)Z_{j}=\sum_{i\neq j}\frac{\partial F_s(|q_j-q_i|)}{\partial q_j}(1+o(1)),
\end{align*}
where
\begin{align*}
F_s(r)=\alpha_sU(r)+\beta_s\varepsilon^{2s-1}r^{2s-1}.
\end{align*}
If $s=\frac12,$
\begin{align*}
\int S(W)Z_j=\sum_{i\neq j}\frac{\partial F_{\frac12}(|q_j-q_i|)}{\partial q_j}(1+o(1)),
\end{align*}
where
\begin{align*}
F_{\frac12}(r)=\alpha_{\frac12}U(r)+\beta_{\frac12}\frac{\log r}{\log\frac{1}{\varepsilon}}.
\end{align*}
\end{lemma}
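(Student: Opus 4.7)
The plan is to begin from the decomposition that the section already sets up,
\begin{equation*}
-\int S(W)Z_1 \;=\; I_1 + I_2,\qquad I_1 = \int (1-V)V^{-1}\!\sum_{i=1}^k U_i^2\,\partial_x U_1,\qquad I_2 = \int V^{-1}\!\sum_{i\ne j}U_iU_j\,\partial_x U_1,
\end{equation*}
and to show that $I_2$ produces the local $\alpha_s U$-part of $F_s$ while $I_1$ produces the nonlocal $\beta_s$-part (power for $s\in(\tfrac12,1)$, logarithm for $s=\tfrac12$). By symmetry it is enough to treat $j=1$; the other indices are identical after relabeling. The tails $x\in\mathbb{R}\setminus I_s$ will be discarded using the lower bounds (5.7)--(5.8) on $V$ together with the decay of $U$ from Proposition 4.1(a), exactly as in the proof of Lemma 6.1.

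For $I_2$, I would first invoke Lemma 5.1 to replace $V^{-1}$ by $1+o(1)$ on the interaction region $I_s$, reducing the integral to $\int\sum_{i\ne j}U_iU_j\,\partial_x U_1\cdot(1+o(1))$. The terms with $i,j\ne 1$ are $O\bigl((|q_i-q_1||q_j-q_1|)^{-(1+2s)}\bigr)$ by Proposition 4.1(a) and hence subleading. The leading contribution comes from $i=1$, $j\ne 1$ (and its symmetric companion), which I rewrite as
\begin{equation*}
\int U_1 U_j\,\partial_x U_1 \;=\; -\tfrac12\,\partial_{q_1}\!\int U^2(x-q_1)\,U(x-q_j)\,dx,
\end{equation*}
and then evaluate using the far-field expansion $U(|y|)\sim \mathfrak{b}_s|y|^{-(1+2s)}$, yielding $\int U_1^2 U_j = \mathfrak{c}_s U(q_1-q_j)(1+o(1))$ and a matching estimate for its $q_1$-derivative. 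Summing over $j$ gives the $\alpha_s U$-contribution.

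For $I_1$, I would substitute the expansion $1-V = -\tau_\varepsilon\sum_{j}H_s(|x-q_j|) - c_s\sum_{j\ne l}\delta_s(|q_j-q_l|) + \text{h.o.t.}$ from Lemma 5.1 and distribute it against $\sum_i U_i^2\partial_x U_1$. The $\delta_s$-pieces are constants in $x$, so $\int U_1^2\partial_x U_1=0$ by symmetry and $\int U_i^2\partial_x U_1 = O(|q_i-q_1|^{-(2+2s)})$ for $i\ne 1$, hence subleading. For the $H_s$-pieces, set $I_{ij}=\int U_i^2 H_s(|x-q_j|)\partial_x U_1$. Then $I_{11}=0$ by symmetry, and a decay estimate shows $I_{ij}$ with $i,j\ne 1$ is of lower order. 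The leading piece is $I_{1j}$ with $j\ne 1$, which I write as
\begin{equation*}
I_{1j} \;=\; -\tfrac{1}{3}\,\partial_{q_1}\!\int U^3(x)\,H_s\bigl(|x-(q_j-q_1)|\bigr)\,dx,
\end{equation*}
and evaluate using the far-field forms of $H_s$ (namely $\mathfrak{a}_1|x|^{2s-1}\int U^2$ for $s\in(\tfrac12,1)$ and $-\pi^{-1}\log|x|\int U^2$ for $s=\tfrac12$) together with the localization of $U^3$ near the origin.

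The main technical obstacle I anticipate is the clean extraction of the asymptotic of $\int U^3(x)H_s(|x-z|)\,dx$ as $|z|\to\infty$. The key point is that on the support of $U^3$ one may substitute $H_s(|x-z|)=H_s(|z|)(1+o(1))$ uniformly, which gives $\mathfrak{b}_s\,\partial_{q_1}|q_1-q_j|^{2s-1}$ (respectively $\partial_{q_1}\log|q_1-q_j|$) with a coefficient determined by $\int U^3$, by $\mathfrak{a}_1$ or $-1/\pi$ from Lemma 2.2, and by the prefactor $\tau_\varepsilon$ from (3.2). It is this bookkeeping that fixes $\beta_s$ so that the contributions from $I_1$ and $I_2$ assemble into $\partial_{q_j}F_s(|q_j-q_i|)$. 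The derivative estimates, the uniformity of the $(1+o(1))$ errors in the admissible range (3.4), and the collection of all subleading contributions into a single $o(1)$ remainder are straightforward but tedious and will need to be carried out carefully across the two regimes $s\in(\tfrac12,1)$ and $s=\tfrac12$ in parallel.
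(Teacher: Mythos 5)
Your proposal follows essentially the same route as the paper: the same split $-\int S(W)Z_1 = I_1 + I_2$, the same replacement $V^{-1}=1+o(1)$ and reduction of $I_2$ to $-\tfrac12\partial_{q_1}\int U_1^2 U_j$ evaluated by the far-field decay of $U$, and the same expansion of $1-V$ via Lemma 5.1 reducing $I_1$ to the terms $I_{1j}=-\tfrac13\partial_{q_1}\int U^3 H_s(|x-(q_j-q_1)|)$ with the remaining pieces ($\delta_s$-pieces, $I_{11}$, $I_{ij}$ with $i,j\ne1$, $I_{i1}$) shown subleading. The only small imprecision is the phrase ``on the support of $U^3$'' (since $U$ has algebraic, not compact, decay one needs the splitting argument of the paper's appendix Lemma 9.1 to justify $H_s(|x-z|)=H_s(|z|)(1+o(1))$), but the substance matches the paper's proof.
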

\vspace{1cm}

\section{The finite-dimensional reduction}
We will carry cut the finite-dimensional reduction process sketched in the first part of the paper. As in the previous section, we shall assume the points $q_i$ satisfy (\ref{3.4}). Recall from Section 3 that the original problem was cast in the form
\begin{align}
\label{7.1}
(-\Delta)^su+u=\frac{u^2}{T(u^2)}.
\end{align}
Rather than solving this directly we consider instead the problem of finding $A$ such that for certain constants $c_i$ one has
\begin{align}
\label{7.2}
(-\Delta)^sA+A=\frac{A^2}{T(A^2)}+\sum_ic_iZ_i
\end{align}
and $\langle A-W,Z_i\rangle=0$ for all $i$. Rewriting $A=W+\phi$ we get that this problem is equivalent to
\begin{align}
\label{7.3}
&(-\Delta)^s\phi+\phi-2W\phi+2W^2\omega\int W\phi\nonumber\\
=&-(-\Delta)^sW-W+\frac{W^2}{V}+\frac{(W+\phi)^2}{T((W+\phi)^2)}-\frac{W^2}{V}-2W\phi+2W^2\omega\int W\phi+\sum_ic_iZ_i\nonumber\\
=&~S(W)+N(\phi)+\sum c_iZ_i
\end{align}
and
\begin{align}
\label{7.4}
\langle \phi, Z_i\rangle=0~\mathrm{for~all}~i.
\end{align}
Using the operator $\mathcal{T}$ introduced in Theorem \ref{th4.1}, we see that the problem is equivalent to finding a $\phi\in\mathcal{H}$ so that
\begin{equation*}
\phi=\mathcal{T}(S(W)+N(\phi))\equiv Q(\phi).
\end{equation*}
We will show that this fixed point problem has a unique solution in a region of the form
\begin{align}
\label{7.5}
\mathfrak{D}_s=\Big\{\phi\in\mathcal{H}\mid
\left\{\begin{array}{ll}
\|\phi\|_*\leq C\varepsilon^{s-\frac{1}{4s}},~&s\in(\frac12,1),\\
\|\phi\|_*\leq C\frac{1}{|\log\varepsilon|^{1-\delta}},&s=\frac12,
\end{array}\right.
\Big\}
\end{align}
for any small positive constant $\delta$, provided $\varepsilon$ is sufficiently small. Here
\begin{align*}
\mathcal{H}=\{\phi\in L^{\infty}\mid\langle\phi, Z_j\rangle=0,~j=1,2,\cdots,k\}.
\end{align*}

We recall that from Lemma \ref{le6.1},
\begin{align*}
\left\{\begin{array}{ll}
\|S(W)\|_*\leq C\varepsilon^{s-\frac{1}{4s}},~&s\in(\frac12,1),\\
\|S(W)\|_*\leq C\frac{1}{|\log\varepsilon|^{1-\delta}},&s=\frac12.
\end{array}\right.
\end{align*}
On the other hand, $N(\phi)$ admits the estimate provided by the following lemma.
\begin{lemma}
\label{le7.1}
Assume that $\phi\in\mathcal{D}_s$. Then
\begin{align*}
\|N(\phi)\|_*\leq C\big(\|\phi\|_*+\sigma(\varepsilon)\big)\|\phi\|_*
\end{align*}
provided $\varepsilon$ is taken sufficiently small. Here
\begin{align*}
\sigma(\varepsilon)=
\left\{\begin{array}{ll}
\varepsilon^{s-\frac{1}{4s}},~s\in(\frac12,1),\\
\frac{1}{|\log\varepsilon|^{1-\delta}},~s=\frac12,
\end{array}\right.
\end{align*}
as $\varepsilon\rightarrow0$.
\end{lemma}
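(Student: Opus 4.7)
The plan is to Taylor-expand $N(\phi)$ in $\phi$ about the base profile $W$ and isolate precisely the pair of terms $-2W\phi$ and $+2\omega W^{2}\int W\phi$ that were subtracted off in its definition, showing that the residue is either quadratic in $\phi$ or carries a factor of size $\sigma(\varepsilon)$. Writing $V_{\phi}:=T((W+\phi)^{2})=V+2T(W\phi)+T(\phi^{2})$ and expanding
\begin{equation*}
\frac{1}{V_{\phi}}=\frac{1}{V}-\frac{2T(W\phi)+T(\phi^{2})}{V^{2}}+R,
\end{equation*}
where $R$ is quadratic in $\bigl(T(W\phi)+T(\phi^{2})\bigr)/V$, multiplying by $(W+\phi)^{2}$ and subtracting $W^{2}/V+2W\phi-2\omega W^{2}\int W\phi$ yields the decomposition
\begin{equation*}
N(\phi)=2W\phi\Bigl(\frac{1}{V}-1\Bigr)+\frac{\phi^{2}}{V}-\frac{2W^{2}T(W\phi)}{V^{2}}+2\omega W^{2}\!\int W\phi+\mathcal{R}(\phi),
\end{equation*}
with $\mathcal{R}(\phi)$ collecting all terms that are at least quadratic in the product of $\phi$ with $T(W\phi)$ or $T(\phi^{2})$.

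Next I would estimate each piece separately in $I_{s}$ and its complement. In $I_{s}$, Lemma \ref{le5.1} gives $V=1+O(\sigma(\varepsilon))$, so the first piece is pointwise bounded by $C\sigma(\varepsilon)W|\phi|\leq C\sigma(\varepsilon)\|\phi\|_{*}\rho$, and the second by $\|\phi\|_{*}^{2}\rho^{2}\leq C\|\phi\|_{*}^{2}\rho$ since $\rho$ is uniformly bounded. The key cancellation occurs in the sum of the third and fourth pieces: using the scheme of Section 3, $T(W\phi)=\omega V\int W\phi+E_{\phi}$, where the remainder obeys $|E_{\phi}|\leq C\sigma(\varepsilon)\|\phi\|_{*}$ near the bumps. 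Hence
\begin{equation*}
-\frac{2W^{2}T(W\phi)}{V^{2}}+2\omega W^{2}\!\int W\phi=2\omega W^{2}\Bigl(1-\frac{1}{V}\Bigr)\!\int W\phi-\frac{2W^{2}E_{\phi}}{V^{2}}=O\bigl(\sigma(\varepsilon)\|\phi\|_{*}\bigr)W^{2},
\end{equation*}
and since $W^{2}\leq C\rho$, the contribution to $\|\cdot\|_{*}$ is at most $C\sigma(\varepsilon)\|\phi\|_{*}$. For $\mathcal{R}(\phi)$, the uniform estimates $\|T(W\phi)\|_{\infty}\leq C\|\phi\|_{*}$ and $\|T(\phi^{2})\|_{\infty}\leq C\|\phi\|_{*}^{2}$—which follow from the mass of $\tau_{\varepsilon}G_{\varepsilon}$ and the pointwise control $|W|,|\phi|\leq C\|\phi\|_{*}\rho$—render each quadratic residue bounded by $C\|\phi\|_{*}^{2}\rho$.

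In the far region $\mathbb{R}\setminus I_{s}$, I would invoke the lower bound $V(x)\geq c(1+(\varepsilon|x|)^{2s+1})^{-1}$ for $s\in(\tfrac12,1)$, and its $s=\tfrac12$ analogue, from Lemma \ref{le5.1}(b). Combined with the algebraic decay of $W$ and $\phi$ encoded in $\rho$ with $\mu=1+2s$, each of the four principal pieces and each summand of $\mathcal{R}(\phi)$ satisfies a pointwise bound of the form $|\cdot|\leq C(\sigma(\varepsilon)+\|\phi\|_{*})\|\phi\|_{*}\rho$; the growth $(\varepsilon|x|)^{2s+1}$ produced by $V^{-1}$ is absorbed by an extra power of $\rho$ coming from the quadratic dependence on $W$ or $\phi$, in complete analogy with the outer-region treatment in the proof of Lemma \ref{le6.1}.

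The main obstacle is the cancellation in the third and fourth pieces: one must verify, with the sharpness of Section 5's Green-function analysis, that $T(W\phi)$ is approximated by $\omega V\int W\phi$ with remainder of size $o(\omega\int W\phi)+O(\sigma(\varepsilon)\|\phi\|_{*})$, uniformly in $x$ near each bump. This is not automatic: it requires exploiting the precise expansion $G_{\varepsilon}(|x-y|)=\mathfrak{a}_{0}\varepsilon^{1-2s}+\mathfrak{a}_{1}|x-y|^{2s-1}+\cdots$ (respectively the logarithmic version when $s=\tfrac12$) together with the choice of $\tau_{\varepsilon}$ in (\ref{3.2}), which is exactly what calibrates $\omega=1/(k\int U^{2})$. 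Once this approximation is established, collecting the bounds from Steps two and three completes the proof.
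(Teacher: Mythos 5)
Your proposal is correct and takes essentially the same approach as the paper: the four principal pieces you isolate (i.e.\ $2W\phi(V^{-1}-1)$, $\phi^{2}/V$, $-2W^{2}T(W\phi)/V^{2}$, $2\omega W^{2}\int W\phi$) coincide with the paper's $N_{2}$, your remainder $\mathcal{R}(\phi)$ coincides with the paper's $N_{1}$, and both arguments rest on the same inner/outer split with the inner cancellation $T(W\phi)\approx\omega\int W\phi$ and the outer lower bound from Lemma~\ref{le5.1}(b). The only cosmetic difference is that in the outer region the paper bounds the undecomposed $N(\phi)$ directly from its definition, while you carry the Taylor-expanded form there; both routes close the estimate identically.
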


\begin{proof}
Let us assume first $x\in\mathbb{R}\setminus I_s$ and $s\in[\frac12,1)$. We observe that using a standard potential analysis one can show that in this range of $x$ we have $W(x)\leq C\rho(x)$ and
\begin{align*}
T\big((W+\phi)^2\big)\geq C\frac{1}{1+(\varepsilon|x|)^{2s+1}}~\mathrm{and}~T(W\phi),T(\phi^2)\leq C\frac{1}{1+(\varepsilon|x|)^{2s+1}}\|\phi\|_*.
\end{align*}
Then
\begin{align*}
|N(\phi)|\leq&~
\Big[\frac{2WV\phi+V\phi^2-2W^2T(W\phi)-W^2T(\phi^2)}{VT((W+\phi)^2)}-2W\phi+2\omega W^2\int W\phi\Big]\nonumber\\
\leq&~C\Big[\frac{\rho(x)^2}{1+(\varepsilon|x|)^{2s+1}}
+\frac{\rho(x)^2}{1+(\varepsilon|x|)^{2s+1}}\|\phi\|_*
\Big]\|\phi\|_*+C\rho(x)^2\|\phi\|_*.
\end{align*}
Therefore, we get
\begin{equation}
\label{7.6}
|\rho(x)^{-1}N(\phi)|\leq~C\Big[\|\phi\|_*+\sigma(\varepsilon)\Big]\|\phi\|_*.
\end{equation}

Let us consider now the case $x\in I_s$ for $s\in[\frac12,1)$. We decompose $N(\phi)$ in the form
\begin{equation*}
N(\phi)=N_1(\phi)+N_2(\phi),
\end{equation*}
where
\begin{align*}
N_1(\phi)=(W+\phi)^2\Big[\frac{1}{T((W+\phi)^2)}-\frac{1}{V}+\frac{2T(W\phi)}{V^2}\Big]
-(2W+\phi)\phi\frac{2T(W\phi)}{V^2}
\end{align*}
and
\begin{align*}
N_2(\phi)=-2\phi W(1-\frac{1}{V})+2W^2\big[\omega\int W\phi-\frac{T(W\phi)}{V^2}\big]+\frac{\phi^2}{V}.
\end{align*}
We have that $T((W+\phi)^2)=V+2T(W\phi)+T(\phi^2).$ On the other hand,
\begin{align*}
V(x)=\left\{\begin{array}{ll}
1+O(\varepsilon^{s-\frac{1}{4s}}),~&s\in(\frac12,1),\\
1+O(\frac{1}{|\log\varepsilon|^{1-\delta}}),~&s=\frac12
\end{array}\right.
\end{align*}
Also,
\begin{align*}
T(W\phi)=\left\{\begin{array}{ll}
\omega\int W\phi+O(\varepsilon^{s-\frac{1}{4s}})\|\phi\|_*,~&s\in(\frac12,1),\\
\omega\int W\phi+O(\frac{1}{|\log\varepsilon|^{1-\delta}})\|\phi\|_*,~&s=\frac12,
\end{array}\right.
\end{align*}
and in particular $|T(W\phi)|=O(\|\phi\|_*)$. Likewise, $T(\phi^2)=O(\|\phi\|_*^2)$. Combining these facts we obtain
\begin{align*}
|N_1(\phi)|\leq&~ C(W+\phi)^2T(\phi^2)+C[2W\phi+\phi^2]T(W\phi)\\
\leq&~C\rho(x)\|\phi\|_*^2.
\end{align*}
A similar analysis yields,
\begin{align*}
|N_2(\phi)|\leq&~
\left\{\begin{array}{ll}
C \varepsilon^{s-\frac{1}{4s}}(|\phi|W+W^2\|\phi\|_*)+C|\phi|^2,~&s\in(\frac12,1),\\
\frac{C}{|\log\varepsilon|^{1-\delta}}(|\phi|W+W^2\|\phi\|_*)+C|\phi|^2,~&s=\frac12.
\end{array}\right.
\end{align*}
Hence,
\begin{align*}
\|N(\phi)\|_*\leq C(\|\phi\|_*^2+\sigma(\varepsilon)\|\phi\|_*)
\end{align*}
in this region. Combining this estimate with (\ref{7.6}), yields the result of the lemma.
\end{proof}

Using the definition of the corresponding norms, splitting different ranges of $x$ as in the above proof, it is readily checked that the following holds: If
\begin{equation}
\label{7.7}
\|\phi_i\|_*\leq
\left\{\begin{array}{lll}
C\varepsilon^{s-\frac{1}{4s}},~&s\in(\frac12,1),\\
C(-\frac{1}{\log\varepsilon})^{1-\delta},~&s=\frac12,
\end{array}\right.
\quad i=1,2,
\end{equation}
then, given any small $\mu>0,$ we can find $\varepsilon$ sufficiently small such that
\begin{equation*}
\|N(\phi_1)-N(\phi_2)\|_{*}\leq\mu\|\phi_1-\phi_2\|_*.
\end{equation*}
This implies that the operator $Q$ is a contraction mapping in the set $\mathfrak{D}_s$ defined in (\ref{7.5}). On the other hand, we also get from the Lemma \ref{le7.1} that $Q$ maps $\mathfrak{D}_s$ into itself. By using Banach fixed point theorem, we get the existence of a unique fixed point of $Q$ in this domain, which depends continuously in the $*-$ norm on the points of $q_i.$ We summarize this result in the following proposition:

\begin{proposition}
\label{pr7.1}
For all sufficiently small $\varepsilon$ and all points $q_i$ satisfying (\ref{3.4}), we have the existence of a unique solution to (\ref{7.3}), $\phi_s=\phi_s(q_1,\cdots,q_{k})$ and $\mathbf{c}_s=\mathbf{c}_s(q_1,\cdots,q_{k})$ which satisfies (\ref{7.8}). Besides, $(\phi_s,\mathbf{c}_s)$ depend continuously on the $q_i$'s.

In addition the following formula holds for the components of $c_{s,j}$ of $\mathbf{c}_s$:
\begin{align}
\label{7.8}
c_{s,j}=b_{s,j}+e_{s,j},~j=1,2,\cdots,2m,
\end{align}
with
\begin{align*}
b_{s,j}=\sum_{i\neq j}\frac{\partial F_s(|q_j-q_i|)}{\partial q_j},
\end{align*}
the error terms $e_{s,j}$ satisfy
\begin{align*}
e_{s,j}=
\left\{\begin{array}{ll}
o(1)\varepsilon^{s+\frac12-\frac{1}{2s}},~&s\in(\frac12,1),\\
o(1)\big(\log\frac{1}{\varepsilon}\big)^{-\frac32},~&s=\frac12.
\end{array}\right.
\end{align*}
\end{proposition}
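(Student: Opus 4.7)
\emph{Existence and uniqueness.} The discussion preceding the statement has already reduced (7.3)--(7.4) to the fixed-point equation $\phi = Q(\phi) := \mathcal{T}(S(W) + N(\phi))$ on $\mathcal{H}$. Combining Theorem \ref{th4.1} with the bound $\|S(W)\|_* \leq C\sigma(\varepsilon)$ from Lemma \ref{le6.1} and the quadratic estimate of Lemma \ref{le7.1} gives $Q : \mathfrak{D}_s \to \mathfrak{D}_s$ once the prefactor in (\ref{7.5}) is chosen large enough. To verify the contraction property I expand $N(\phi_1) - N(\phi_2)$ along the same decomposition into $I_s$ and $\mathbb{R}\setminus I_s$ used in the proof of Lemma \ref{le7.1}; the resulting Lipschitz constant is $o(1)$ on $\mathfrak{D}_s$ as $\varepsilon\to0$. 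Banach's fixed-point theorem then produces a unique $\phi_s$, and $\mathbf{c}_s$ is read off from the equation. Uniformity of all estimates in $(q_1,\ldots,q_k)\in\Lambda_s$ together with the smooth dependence of $W, Z_j$ on the $q_i$ yields continuous dependence of $(\phi_s,\mathbf{c}_s)$ on the points.

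\emph{Extracting the leading term.} To obtain (\ref{7.8}) I test (\ref{7.3}) in $L^2$ against $Z_j$:
\begin{equation*}
\sum_i c_{s,i}\langle Z_i,Z_j\rangle \;=\; \langle L\phi_s,Z_j\rangle - \langle S(W),Z_j\rangle - \langle N(\phi_s),Z_j\rangle.
\end{equation*}
The Gram matrix $M_{ij}:=\langle Z_i,Z_j\rangle$ is strongly diagonally dominant: the diagonal entries are the positive universal constant $\int (U')^2$, while by the $|x|^{-(2+2s)}$ decay of $U'$ from Proposition \ref{pr4.1}(a) the off-diagonal entries are $O(|q_i-q_j|^{-(2+2s)})$, which is $o(1)$ on $\Lambda_s$. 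Thus $M$ is invertible with $M^{-1}$ a small perturbation of $(\int(U')^2)^{-1}\mathrm{Id}$, and the leading part of $c_{s,j}$ is $(\int(U')^2)^{-1}\langle S(W),Z_j\rangle$. By Lemma \ref{le6.2} this equals $\sum_{i\neq j}\partial_{q_j}F_s(|q_j-q_i|)(1+o(1))$; absorbing the normalization $\int (U')^2$ together with the constants appearing in Lemma \ref{le6.2} into the definitions of $\alpha_s, \beta_s$ (hence of $F_s$) reproduces exactly $b_{s,j}$.

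\emph{Controlling $e_{s,j}$ and the main obstacle.} Three contributions remain to be bounded. First, $\langle L\phi_s,Z_j\rangle = \langle\phi_s,L^*Z_j\rangle$; by Proposition \ref{pr4.1}(c) together with $\|\phi_s\|_*\leq C\sigma(\varepsilon)$ and the uniform finiteness of $\int W\rho$, this is controlled by $C\sigma(\varepsilon)\min_{i\neq j}|q_i-q_j|^{-(2s+2)}$. Second, $|\langle N(\phi_s),Z_j\rangle| \leq C\|N(\phi_s)\|_* = O(\sigma(\varepsilon)^2)$ by Lemma \ref{le7.1}. Third, the off-diagonal coupling in $M^{-1}$ combined with the $(1+o(1))$ factor in Lemma \ref{le6.2} contributes only at the same $o(1)\cdot b_{s,j}$ level. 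Inserting the scale $|q_i-q_j|\sim\varepsilon^{(1-2s)/(4s)}$ for $s\in(\frac12,1)$ (respectively $(\log\frac{1}{\varepsilon})^{1/2}$ for $s=\frac12$), a direct exponent check shows all three fit within $o(1)\varepsilon^{s+\frac12-\frac{1}{2s}}$ (resp.\ $o(1)(\log\frac{1}{\varepsilon})^{-3/2}$). The principal obstacle is precisely this exponent bookkeeping, particularly in the borderline case $s=\frac12$: here $\|S(W)\|_*$ and $\|\phi_s\|_*$ are only logarithmically small and one must simultaneously exploit the $\delta$-gain in Lemma \ref{le6.1} and the $|q_i-q_j|^{-3}$ decay of $L^*Z_j$ to push the combined error strictly below $(\log\frac{1}{\varepsilon})^{-3/2}$.
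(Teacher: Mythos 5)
Your proposal is correct and follows essentially the same route as the paper: the existence/uniqueness/continuity via the Banach fixed-point argument in $\mathfrak{D}_s$, then testing (\ref{7.3}) against $Z_j$ to obtain the almost-diagonal linear system $\sum_i c_{s,i}\langle Z_i,Z_j\rangle = \langle\phi_s,L^*Z_j\rangle - \langle S(W),Z_j\rangle - \langle N(\phi_s),Z_j\rangle$, isolating $\langle S(W),Z_j\rangle$ as the leading term via Lemma \ref{le6.2}, and bounding the remaining three contributions by the same exponent bookkeeping. Your exponent checks (e.g.\ $\sigma(\varepsilon)\cdot\min|q_i-q_j|^{-(2s+2)} = \varepsilon^{s-\frac{1}{4s}}\cdot\varepsilon^{\frac{(2s-1)(s+1)}{2s}}$ for $s\in(\tfrac12,1)$, and $\sigma(\varepsilon)^2$ for the $N(\phi)$ pairing) reproduce exactly the paper's bounds; the only cosmetic difference is a sign on $(\int(U')^2)^{-1}\langle S(W),Z_j\rangle$, which both you and the paper absorb into the normalization of $\alpha_s,\beta_s$.
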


\begin{proof}
We only need to prove the formula for $c_{s,j}$'s. Let us observe that the $c_{s,j}$ satisfy the relations
\begin{align*}
\sum c_{s,j}\langle Z_i,Z_j\rangle=-\langle S(W),Z_j\rangle-\langle N(\phi),Z_j\rangle+\langle\phi,L^*(Z_j)\rangle,
\end{align*}
which define an "almost diagonal" system,  from which the $c_{s,j}$'s can be solved uniquely. The main term in the above expansion is given by $\langle S(W),Z_j\rangle$. To obtain the estimates for these numbers, which will equal the $c_{s,j}$'s at the leading order, we observe that
\begin{align*}
|\langle\phi,L^*(Z_j)\rangle|\leq
\left\{\begin{array}{ll}
C\varepsilon^{\frac{(2s-1)(s+1)}{2s}}\|\phi\|_*,~&s\in(\frac12,1),\\
C\frac{1}{|\log\varepsilon|^{\frac32-\delta}}\|\phi\|_*,~&s=\frac12,
\end{array}\right.
\end{align*}
and
\begin{align*}
|\langle N(\phi),Z_j\rangle|\leq
\left\{\begin{array}{ll}
C\varepsilon^{2s-\frac{1}{2s}},~&s\in(\frac12,1),\\
C\frac{1}{|\log\varepsilon|^{2-\delta}},~&s=\frac12.
\end{array}\right.
\end{align*}
Formula (\ref{7.8}) is now an immediate corollary of Lemma \ref{le6.1}, Lemma \ref{le7.1}, and the expressions found for the $c_{s,j}$'s.
\end{proof}

In the following section we will find the points $q_j$ such that all $c_{s,j}$'s vanish, and satisfying the conditions in (\ref{3.4}).

\vspace{1cm}
\section{Solving the reduced problem}
In this section, we shall look for the point $(q_1,q_2,\cdots,q_k)$ such that $c_{s,j}=0$ and thereby prove Theorem \ref{th1.1}. We first establish the presence of the zero of ${\bf{b}}_s=(b_{s,1},b_{s,2},\cdots,b_{s,k})$ and then use the degree theory to get the existence of the points $(q_1,q_2,\cdots,q_k)$ such that ${\bf{c}}_s=0.$

We recall that
\begin{equation}
\label{8.1}
b_{s,j}=\sum_{i\neq j}\frac{\partial F_s(|q_i-q_j|)}{\partial q_j},
\end{equation}
where
\begin{align*}
F_s(r)=\left\{\begin{array}{ll}
\alpha_sU(r)+\beta_s\varepsilon^{2s-1}r^{2s-1},~&s\in(\frac12,1),\\
\alpha_{\frac12}U(r)+\beta_{\frac12}\frac{\log r}{\log\frac{1}{\varepsilon}},~&s=\frac12.
\end{array}\right.
\end{align*}
It is not difficult to see that finding the zero point of ${\bf{b}}_s$ is equivalent to finding the critical point of the following function,
\begin{align}
\label{8.2}
\widehat{\Xi}_s(q_1,q_2,\cdots,q_k)=
\sum_{i\neq j}\Big(\alpha_sU(q_i-q_j)+\beta_s\varepsilon^{2s-1}|q_i-q_j|^{2s-1}\Big)
\end{align}
for $s\in(\frac12,1)$ and
\begin{align}
\label{8.3}
\widehat{\Xi}_{\frac12}(q_1,q_2,\cdots,q_k)=\sum_{i\neq j}\Big(\alpha_{\frac12}U(q_i-q_j)
+\beta_{\frac12}\frac{1}{\log\frac{1}{\varepsilon}}\log|q_i-q_j|\Big)
\end{align}
for $s=\frac12.$

Since $q_i$ and $q_{k+1-i}$ are symmetry with respect to the origin for $i=1,2,\cdots,k.$ So, finding the critical point of $\widehat{\Xi}_s,~s\in[\frac12,1)$ in (\ref{3.4}) is reduced to finding the critical point of $\Xi_s$ introduced in (\ref{1.9}) for $s\in(\frac12,1)$ in (\ref{1.11}) and (\ref{1.10}) for $s=\frac12$ in (\ref{1.12}) respectively.


For the functions $\Xi_s,~s\in[\frac12,1)$, we have the following property
\begin{lemma}
\label{le8.1}
The functions $\Xi_s,~s\in[\frac12,1)$ admit an interior minimal point in the set (\ref{1.11}) and (\ref{1.12}) for $s\in(\frac12,1)$ and $s=\frac12$ respectively provided $\eta$ is sufficiently small.
\end{lemma}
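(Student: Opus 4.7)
\vspace{0.3cm}
\noindent\emph{Proof proposal.} The plan is to rescale so that the constraint set becomes a fixed, $\varepsilon$-independent region in new coordinates, extract the leading-order limit functional $\Phi_s$, and check that this limit is coercive on that region once $\eta$ is small; continuity then transfers the conclusion to $\Xi_s$ itself.

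For $s\in(\tfrac12,1)$, I would set $q_i=\varepsilon^{(1-2s)/(4s)}p_i$. This maps $Q_{s,\eta}$ bijectively onto the $\varepsilon$-independent region
\[
\widetilde{Q}_\eta=\bigl\{(p_1,\ldots,p_m):\eta<p_i<\tfrac{1}{\eta},\ |p_i-p_j|>\eta\bigr\}.
\]
Since $\varepsilon^{(1-2s)/(4s)}\to\infty$, every argument of $U$ appearing in $\Xi_s$ tends to $+\infty$, so Proposition \ref{pr4.1}(a) gives $U(r)=\mathfrak{b}_s r^{-(1+2s)}(1+o(1))$. A direct computation shows that both the $\alpha_sU$-terms and the $\beta_s\varepsilon^{2s-1}|\cdot|^{2s-1}$-terms scale like $\varepsilon^{(4s^2-1)/(4s)}$, and factoring out this common scale yields
\[
\Xi_s(q)=\varepsilon^{(4s^2-1)/(4s)}\bigl[\Phi_s(p)+o(1)\bigr]
\]
uniformly on compact subsets of $\widetilde{Q}_\eta$, where $\Phi_s$ is obtained from $\Xi_s$ by the substitutions $\alpha_sU(r)\mapsto\alpha_s\mathfrak{b}_s r^{-(1+2s)}$ and $\varepsilon^{2s-1}r^{2s-1}\mapsto r^{2s-1}$ with $r\in\{2p_i,|p_i-p_j|,p_i+p_j\}$.

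Next I would verify that $\Phi_s$ is coercive on $\widetilde{Q}_\eta$. Because $\alpha_s,\beta_s>0$, the inverse-power summands $r^{-(1+2s)}$ force $\Phi_s\gtrsim\eta^{-(1+2s)}$ on the parts of $\partial\widetilde{Q}_\eta$ where some $p_i\to\eta$ or $|p_i-p_j|\to\eta$, while the polynomial summands $r^{2s-1}$ force $\Phi_s\gtrsim\eta^{-(2s-1)}$ where some $p_i\to 1/\eta$. Evaluating at the fixed interior reference point $p_i=i$ (which lies in $\widetilde{Q}_\eta$ provided $\eta<1/m$) bounds $\Phi_s$ there by a constant depending only on $m,s$. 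Hence for $\eta$ small enough that both $\eta^{-(1+2s)}$ and $\eta^{-(2s-1)}$ dominate this reference value, any global minimizer of $\Phi_s$ on $\overline{\widetilde{Q}_\eta}$ must lie in the interior. For such $\eta$ and all $\varepsilon$ small, the $o(1)$ remainder does not disturb this conclusion, so $\Xi_s$ attains its minimum in the interior of $Q_{s,\eta}$.

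The case $s=\tfrac{1}{2}$ is parallel, with rescaling $q_i=(\log\tfrac{1}{\varepsilon})^{1/2}p_i$. After discarding an irrelevant $p$-independent additive constant of size $(\log\log\tfrac{1}{\varepsilon})/\log\tfrac{1}{\varepsilon}$ (coming from $\log q_i=\log p_i+\tfrac12\log\log\tfrac{1}{\varepsilon}$), both families of terms scale like $(\log\tfrac{1}{\varepsilon})^{-1}$, yielding the limit
\[
\Phi_{1/2}(p)=\sum_i\bigl[\alpha_{1/2}\mathfrak{b}_{1/2}(2p_i)^{-2}+\beta_{1/2}\log(2p_i)\bigr]+\text{analogous sums in }|p_i\pm p_j|,
\]
which is again coercive (inverse squares force points apart and away from $0$, logs prevent escape to infinity). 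The only real technical point — and the main obstacle — is to confirm that the asymptotic replacement of $U$ by its power-law (resp. logarithmic) tail via Proposition \ref{pr4.1}(a) is uniform on $\overline{\widetilde{Q}_\eta}$ and controls the full functional up to the boundary; this is immediate because all $q$-distances are uniformly large throughout the region.
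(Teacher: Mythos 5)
Your proposal is correct and follows essentially the same route as the paper: rescale by $\varepsilon^{(1-2s)/(4s)}$ (resp.\ $(\log\frac{1}{\varepsilon})^{1/2}$) to land in a fixed region, replace $U$ by its power-law tail, factor out the common $\varepsilon$-scale $\varepsilon^{s-\frac{1}{4s}}$ (resp.\ $(\log\frac{1}{\varepsilon})^{-1}$, after discarding the $p$-independent $\log\log\frac{1}{\varepsilon}$ additive constant), and compare boundary values with the fixed interior reference configuration $p_i=i$ as $\eta\to0$. The only point your write-up leaves implicit, and which the paper spells out by introducing $\mathfrak{g}(x)=x^{-2}+\gamma_{1/2}\log x$ and computing its global minimum, is that when $s=\tfrac12$ each single summand is uniformly bounded below, so the logarithms in the non-boundary terms cannot cancel the $\eta^{-2}$ blow-up of the boundary term; once that one-line observation is added, your argument matches the paper's.
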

\begin{proof}
By Proposition \ref{pr4.1}, we have as $|x|\rightarrow\infty,$
$$U(x)\rightarrow\frac{\mathfrak{b}_s}{|x|^{1+2s}}(1+o(1)).$$

For $\varepsilon$ sufficiently small, we have that $\Xi_s$ admit the following asymptotic expansion
\begin{align}
\label{8.4}
\Xi_s(q_1,q_2,\cdots,q_m)=~&
\sum_{i=1}^m\Big(\frac{\mathfrak{b}_s\alpha_s}{|2q_i|^{1+2s}}(1+o(1))+\beta_s\varepsilon^{2s-1}|2q_i|^{2s-1}\Big)\nonumber\\
&+\sum_{i\neq j}\Big(\frac{\mathfrak{b}_s\alpha_s}{|q_i-q_j|^{1+2s}}(1+o(1))+\beta_s\varepsilon^{2s-1}|q_i-q_j|^{2s-1}\Big)\nonumber\\
&+\sum_{i\neq j}\Big(\frac{\mathfrak{b}_s\alpha_s}{|q_i+q_j|^{1+2s}}(1+o(1))+\beta_s\varepsilon^{2s-1}|q_i+q_j|^{2s-1}\Big)
\end{align}
for $s\in(\frac12,1)$ and
\begin{align}
\label{8.5}
\Xi_{\frac12}(q_1,q_2,\cdots,q_m)=~&
\sum_{i=1}^m\Big(\frac{\mathfrak{b}_{\frac12}\alpha_{\frac12}}{|2q_i|^2}(1+o(1))
+\beta_{\frac12}\frac{1}{\log\frac{1}{\varepsilon}}\log|2q_i|\Big)\nonumber\\
&+\sum_{i\neq j}\Big(\frac{\mathfrak{b}_{\frac12}\alpha_{\frac12}}{|q_i-q_j|^2}(1+o(1))
+\beta_{\frac12}\frac{1}{\log\frac{1}{\varepsilon}}\log|q_i-q_j|\Big)\nonumber\\
&+\sum_{i\neq j}\Big(\frac{\mathfrak{b}_{\frac12}\alpha_{\frac12}}{|q_i+q_j|^2}(1+o(1))
+\beta_{\frac12}\frac{1}{\log\frac{1}{\varepsilon}}\log|q_i+q_j|\Big)
\end{align}
for $s=\frac12.$

For convenience, we make the following substitution,
\begin{align}
\label{8.6}
|q_i-q_j|=\left\{\begin{array}{ll}
\varepsilon^{\frac{1-2s}{4s}}|d_i-d_j|,~&s\in(\frac12,1),\\
(\log\frac{1}{\varepsilon})^{\frac12}|d_i-d_j|,~&s=\frac12.
\end{array}\right.
\end{align}

Then, (\ref{8.4})-(\ref{8.5}) turns to be
\begin{align}
\label{8.7}
\Xi_s(q_1,q_2,\cdots,q_m)=~&\mathfrak{b}_s\alpha_s\varepsilon^{s-\frac{1}{4s}}\Big[
\sum_{i=1}^m\Big(\frac{1}{|2d_i|^{1+2s}}(1+o(1))+\gamma_s|2d_i|^{2s-1}\Big)\nonumber\\
&+\sum_{i\neq j}\Big(\frac{1}{|d_i-d_j|^{1+2s}}(1+o(1))+\gamma_s|d_i-d_j|^{2s-1}\Big)\nonumber\\
&+\sum_{i\neq j}\Big(\frac{1}{|d_i+d_j|^{1+2s}}(1+o(1))+\gamma_s|d_i+d_j|^{2s-1}\Big)\Big]
\end{align}
for $s\in(\frac12,1)$ and
\begin{align}
\label{8.8}
\Xi_{\frac12}(q_1,q_2,\cdots,q_m)=~&
\mathfrak{b}_{\frac12}\alpha_{\frac12}\frac{1}{\log\frac{1}{\varepsilon}}\Big[
\sum_{i=1}^m\Big(\frac{1}{|2d_i|^2}(1+o(1))
+\gamma_{\frac12}\log|2d_i|\Big)\nonumber\\
&+\sum_{i\neq j}\Big(\frac{1}{|d_i-d_j|^2}(1+o(1))
+\gamma_{\frac12}\log|d_i-d_j|\Big)\nonumber\\
&+\sum_{i\neq j}\Big(\frac{1}{|d_i+d_j|^2}(1+o(1))
+\gamma_{\frac12}\log|d_i+d_j|\Big)\nonumber\\
&+(m^2-\frac12m)\gamma_{\frac12}\log(\log\frac{1}{\varepsilon})
\Big]
\end{align}
for $s=\frac12.$ Here $\gamma_s=\frac{\alpha_s}{\beta_S}.$

Since the process of finding the interior global minimal point of $\Xi_s$ for $s\in(\frac12,1)$ in (\ref{1.11}) and $\Xi_s$ for $s=\frac12$ in (\ref{1.12}) are the same, in the following we shall only give the detail of the case $s=\frac12.$

Before studying $\Xi_\frac12$, we first consider the following function
$$\mathfrak{g}(x)=x^{-2}+\gamma_{\frac12}\log x~\mathrm{for}~x>0.$$
By analyzing the derivative of function $\mathfrak{g},$ we know that
$$\mathfrak{g}(\sqrt{\frac{2}{\gamma_{\frac12}}})=\min_{x>0}\mathfrak{g}(x)
=\big(\frac12+\frac12\log2\big)\gamma_{\frac12}-\frac12\gamma_{\frac12}\log\gamma_{\frac12}.$$

Let us come back to the function $\Xi_{\frac12}$. For $(q_1,q_2,\cdots,q_m)\in\partial Q_{s,\eta},$ we have either there is some $i$ such that $d_i=\eta$ or $d_i=\frac{1}{\eta}$, or there are $i,j$ such that $|d_i-d_j|=\eta.$ If the former case happens, i.e., there is some $i$ such that $d_i=\eta$ or $d_i=\frac{1}{\eta}$. For convenience, we write
$$\tilde{\Xi}_{\frac12}(d_1,d_2,\cdots,d_m)=
\Big(\mathfrak{b}_{\frac12}\alpha_{\frac12}\frac{1}{\log\frac{1}{\varepsilon}}\Big)^{-1}\Xi_{\frac12}(q_1,q_2,\cdots,q_m)
-(m^2-\frac12m)\gamma_{\frac12}\log\log\frac{1}{\varepsilon}.$$
Then, we can get
\begin{align}
\label{8.9}
\tilde{\Xi}_{\frac12}(d_1,d_2,\cdots,d_m)\geq&\min\{\frac14\eta^{-2}\big(1+o(1)\big)+\gamma_{\frac12}\log 2\eta,\frac14\eta^2\big(1+o(1)\big)-\gamma_{\frac12}\log\frac{\eta}{2}\}\nonumber\\
&~+\big(2m^2-m-1\big)\big[\big(\frac12+\frac12\log2\big)\gamma_{\frac12}-\frac12\gamma_{\frac12}\log\gamma_{\frac12}
\big]\big(1+o(1)\big)\nonumber\\
\geq&~\tilde{\Xi}_{\frac12}(1,2,\cdots,m)
\end{align}
provided $\eta$ is small enough. If there are some $i,j$ such that $|d_i-d_j|=\eta.$ Then,
\begin{align}
\label{8.10}
\tilde{\Xi}_{\frac12}(d_1,d_2,\cdots,d_m)\geq~&\big(2m^2-m-1\big)\big[\big(\frac12+\frac12\log2\big)\gamma_{\frac12}
-\frac12\gamma_{\frac12}\log\gamma_{\frac12}\big]\big(1+o(1)\big)\nonumber\\
&+\eta^{-2}\big(1+o(1)\big)+\gamma_{\frac12}\log\eta\nonumber\\
\geq~&\tilde{\Xi}_{\frac12}(1,2,\cdots,m)
\end{align}
provided $\eta$ is small enough. By (\ref{8.9}) and (\ref{8.10}), we get
\begin{align}
\label{8.11}
\min_{{\bf{q}}\in\partial Q_{\frac12,\eta}}\Xi_{\frac12}(q_1,q_2,\cdots,q_m)>
\min_{{\bf{q}}\in Q_{\frac12,\eta}}\Xi_{\frac12}(q_1,q_2,\cdots,q_m).
\end{align}
As a conclusion, $\Xi_{\frac12}(q_1,q_2,\cdots,q_m)$ admits a global interior minimal point in $Q_{\frac12,\eta}.$
\end{proof}

Before we give the proof of Theorem \ref{th1.1},  we recall the following definition (see Definition 2.4 in \cite{mp} or in \cite{l}).\\

\noindent{\bf{Definition 8.1}}. Let $f:D\rightarrow\mathbb{R}$ be a $C^1-$function, where $D\subset\mathbb{R}^m$ is an open set. We say that $x_0$ is stable critical point of $f$ if $\nabla f(x_0)=0$ and there exists a neighborhood $U$ of $x_0$ such that
$$\nabla f(x)\neq0,~\forall x\in\partial U,$$
$$\nabla f(x)=0,~x\in U\quad\Longleftrightarrow\quad f(x)=f(x_0),$$
and
$$\mathrm{deg}(\nabla f,U,0)\neq0,$$
where $\mathrm{deg}$ denotes the Brouwer degree.\\

\noindent{\bf{Remark}}: It is easy to see that, if $x_0$ is a global minimum point or a global maximum point of the function $f$, then $x_0$ is a stable critical point of $f$.\\

\noindent{\em Proof of Theorem \ref{th1.1}}: By Lemma \ref{le8.1}, we get the existence of $(q_1,q_2,\cdots,q_k)$ such that
$$\mathbf{b}_s(q_1,q_2,\cdots,q_k)=0.$$
Furthermore, such a point is the global minimal point of the function $\Xi_s$ in (\ref{1.11}) and (\ref{1.12}) for $s\in(\frac12,1)$ and $s=\frac12$ respectively. We denote such point by ${\bf{q}}_s=(q_{s,1},q_{s,2},\cdots,q_{s,m})$. Now, we shall look for ${\bf{q}}=(q_1,q_2,\cdots,q_m)$ in the neighborhood of ${\bf{q}}_s$ to make ${\bf{c}}_s=0$. As we mentioned in the previous remark, such a global minimal point is a stable critical point of $\Xi_s.$ Using the property of the stable critical point and relation between the function $\widehat{\Xi}_s$ and $\Xi_s$, we can find an open neighborhood $O_{{\bf{q}}_s}$ of ${\bf{q}}_s$ in $Q_{s,\eta}$ such that the following holds
\begin{align*}
\mathrm{deg}\big(\nabla_{{\bf{q}}_s}\widehat{\Xi}_s,O_{{\bf{q}}_s},0\big)\neq0~\mathrm{and}~ \nabla_{{\bf{q}}_s}\widehat{\Xi}_s\neq0~\mathrm{on}~\partial O_{{\bf{q}}_s},
\end{align*}
which implies
\begin{align*}
\mathrm{deg}\big({\bf{b}}_s,O_{{\bf{q}}_s},0\big)\neq0~\mathrm{and}~{\bf{b}}_s\neq0~\mathrm{on}~\partial O_{{\bf{q}}_s}.
\end{align*}
According to the definition of $\Xi_s$, we can get
\begin{align*}
|{\bf{b}}_s|\geq
\left\{\begin{array}{ll}
\mathfrak{d}_s\varepsilon^{s+\frac12-\frac{1}{2s}},~&s\in(\frac12,1),\\
\mathfrak{d}_{\frac12}\frac{1}{(\log\frac{1}{\varepsilon})^{\frac32}},~&s=\frac12,
\end{array}\right.~\mathrm{on}~\partial O_{{\bf{q}}_s},
\end{align*}
where $\mathfrak{d}_s,~s\in[\frac12,1)$ are strictly positive constants.

Next, Let us introduce the following homotopy,
\begin{align*}
H(t,s,{\bf{q}})={\bf{b}}_s+t({\bf{c}}_s-{\bf{b}}_s).
\end{align*}
We find that
$$H(1,s,{\bf{q}})={\bf{c}}_s~\mathrm{and}~ H(0,s,{\bf{q}})={\bf{b}}_s.$$
It is known in Proposition \ref{pr7.1}
\begin{align*}
|{\bf{c}}_s-{\bf{b}}_s|=
\left\{\begin{array}{ll}
o(1)\varepsilon^{s+\frac12-\frac{1}{2s}},~&s\in(\frac12,1),\\
o(1)\big(\log\frac{1}{\varepsilon}\big)^{-\frac32},~&s=\frac12,
\end{array}\right.~\mathrm{in}~\overline{O_{{\bf{q}}_s}}.
\end{align*}
As a result, we get $H(t,s,{\bf{q}})\neq0$ on $\partial O_{{\bf{q}}_s}$. Therefore,
$$\mathrm{deg}\big(H(1,s,{\bf{q}}),O_{{\bf{q}}_s},0\big)=\mathrm{deg}\big(H(0,s,{\bf{q}}),O_{{\bf{q}}_s},0\big),$$
which implies
$$\mathrm{deg}\big({\bf{c}}_s,O_{{\bf{q}}_s},0\big)=\mathrm{deg}\big({\bf{b}}_s,O_{{\bf{q}}_s},0\big).$$
We already know that the right hand side of the above equality is non-zero, therefore, $\mathrm{deg}\big({\bf{c}}_s,O_{{\bf{q}}_s},0\big)\neq0.$ As a result, we can find ${\bf{q}}$ in $O_{{\bf{q}}_s}$ such that ${\bf{c}}_s=0$. Hence, we finish the proof of Theorem \ref{th1.1}. $\quad\quad\quad\quad\quad\quad\quad\quad\quad\quad\quad\quad\quad\quad\quad\quad\quad\quad\quad\quad\square$
\vspace{0.3cm}

The proof of even number bumps case is thus concluded. Let as assume that $k=2m+1$ and briefly sketch the way to proceed in this situation. In this case we introduce the set
\begin{align*}
(q_1,q_2,\cdots,q_k)\in \Lambda_s^o=\Big\{&(q_1,q_2,\cdots,q_k)\mid q_k=0,~ q_i=-q_{k-i},~q_1>q_2>\cdots>q_{k-1},\nonumber\\
&(q_1,q_2,\cdots,q_m)\in Q_{s,\eta}\Big\}.
\end{align*}
Now we consider the first approximation
\begin{align*}
W(x)=\sum_{i=1}^kU(x-q_i),~(q_1,q_2,\cdots,q_k)\in\Lambda_s^o.
\end{align*}
In this case, because of the even symmetry, the "bad directions" corresponding to small eigenvalues are only those $Z_i$ with $1\leq i\leq k-1.$ With this observation made, the rest of the scheme of proof goes almost the same way.

\vspace{1cm}
\section{Appendix}
In this section, we list the estimates used in previous sections and give a proof in the following lemma.
\begin{lemma}
\label{lea1}
\begin{align}
\label{a1}
\int U^2(|y|)U(|x-y|)\mathrm{d}y=\mathfrak{c}_sU(|x|)(1+o(1))~\mathrm{as}~|x|\rightarrow\infty,
\end{align}
\begin{align}
\label{a2}
\int U^2(|y|)|x-y|^{2s-1}\mathrm{d}y=\mathfrak{c}_s|x|^{2s-1}(1+o(1))~\mathrm{as}~|x|\rightarrow\infty,
\end{align}
and
\begin{align}
\label{a3}
\int U^2(|y|)\log|x-y|\mathrm{d}y=\mathfrak{c}_s\log|x|(1+o(1))~\mathrm{as}~|x|\rightarrow\infty.
\end{align}
Here $\mathfrak{c}_s$ depends on the integral of $\int U^2$.
\end{lemma}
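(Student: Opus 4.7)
\section*{Proof proposal for Lemma \ref{lea1}}

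The plan is to exploit the fact that $U^2(|y|)$ is concentrated near the origin because of the algebraic decay $U(|y|)\sim \mathfrak{b}_s |y|^{-(1+2s)}$ established in Proposition \ref{pr4.1}(a), so that $U^2\in L^1(\mathbb{R})$ with total mass $\int U^2 dy$. In each of the three integrals, the kernel $K(x-y)$ (equal to $U(|x-y|)$, $|x-y|^{2s-1}$, or $\log|x-y|$) is smooth in $y$ on a large ball around $0$ whenever $|x|$ is large, and its value there is well approximated by $K(x)$. The constant $\mathfrak{c}_s$ will turn out to be $\int_{\mathbb{R}} U^2(|y|)\,dy$ in all three cases.

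The first step is to perform a standard dyadic decomposition
\begin{equation*}
\int U^2(|y|) K(x-y)\,dy = \Bigl(\int_{|y|\leq |x|/2} + \int_{|y|>|x|/2}\Bigr) U^2(|y|) K(x-y)\,dy =: A(x)+B(x).
\end{equation*}
For the \emph{inner} piece $A(x)$, on the domain $|y|\leq |x|/2$ one has $|x-y|\in [|x|/2,3|x|/2]$, so Taylor expansion of $K$ around $x$ gives $K(x-y)=K(x)\bigl(1+O(|y|/|x|)\bigr)$ for each of the three choices of $K$ (using $U'(r)\sim -(1+2s)\mathfrak{b}_s r^{-(2+2s)}$ from Proposition \ref{pr4.1}(a) for case (\ref{a1}), and direct differentiation of $r^{2s-1}$ and $\log r$ for (\ref{a2})-(\ref{a3})). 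Hence
\begin{equation*}
A(x) = K(x)\int_{|y|\leq |x|/2} U^2(|y|)\,dy + O\!\left(\frac{K(x)}{|x|}\int |y|\,U^2(|y|)\,dy\right),
\end{equation*}
and since $\int U^2 dy$ and $\int |y|U^2 dy$ are both finite (because $U^2$ decays like $|y|^{-(2+4s)}$ and $2+4s>2$), we conclude $A(x)=\mathfrak{c}_s K(x)(1+o(1))$ with $\mathfrak{c}_s = \int U^2$.

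The second step is to show $B(x)=o(K(x))$. On $|y|>|x|/2$ we bound $U^2(|y|)\leq C|x|^{-(2+4s)}(1+|y|)^{-\mu}$ for a large enough $\mu$ (using the pointwise decay of $U$), which makes $U^2$ effectively carry a factor $|x|^{-(2+4s)}$. Combined with a crude estimate on $\int K(x-y)(1+|y|)^{-\mu}dy$ (which grows at most polynomially in $|x|$ of order $2s-1$ for case (\ref{a2}), $\log|x|$ for (\ref{a3}), and stays bounded for (\ref{a1})), this gives $B(x)=O(|x|^{-(2+4s)}\cdot p(x))$, which is of smaller order than $K(x)$ in every case since $K(x)$ decays no faster than $|x|^{-(1+2s)}$. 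Combining $A$ and $B$ yields the three claimed asymptotics.

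The main technical step to be careful about is the transition region $|y|\approx |x|/2$, where neither the Taylor approximation nor the decay bound is sharp; but since this region has finite width and $U^2$ is already as small as $|x|^{-(2+4s)}$ there, the contribution is absorbed into the error term. Nothing deeper than the pointwise decay estimates of Proposition \ref{pr4.1}(a) and elementary calculus is needed, so I expect the proof to be short once the split above is organized.
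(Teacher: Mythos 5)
Your overall plan mirrors the paper's proof of (\ref{a1}): split the integral into a near-origin piece where $U^2$ carries essentially all the mass, Taylor-expand the kernel there, and show the far piece is negligible via the decay of $U^2$. The paper cuts at $|y|=|x|^{2/3}$ rather than your $|y|=|x|/2$, but either choice works, and your inner-region estimate correctly gives the leading term $\mathfrak{c}_s K(x)$ with $\mathfrak{c}_s=\int U^2$.

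There is, however, a genuine error in your treatment of the outer piece $B(x)$. You claim that for $|y|>|x|/2$ one has $U^2(|y|)\le C|x|^{-(2+4s)}(1+|y|)^{-\mu}$ for some large $\mu>0$. This inequality is false for every $\mu>0$: testing at $|y|=|x|/2$, the left side is comparable to $|x|^{-(2+4s)}$ while the right side is comparable to $|x|^{-(2+4s)-\mu}$. You cannot extract a decaying factor $(1+|y|)^{-\mu}$ on top of the full prefactor $|x|^{-(2+4s)}$; they compete for the same decay budget. The fix differs by case. For (\ref{a1}), where $K=U\in L^1(\mathbb{R})$, use only the constant bound $U^2(|y|)\le C|x|^{-(2+4s)}$ on $\{|y|>|x|/2\}$ and integrate $U(x-y)$ over $\mathbb{R}$, giving $B(x)=O(|x|^{-(2+4s)})=o(U(|x|))$; this is exactly the computation behind the paper's estimate $(a5)$. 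For (\ref{a2}) and (\ref{a3}) the kernel is not integrable, so this constant-bound trick fails and your stated $B$-estimate has no valid justification. Instead note that for $|y|>|x|/2$ one has $|x-y|\le 3|y|$, hence $K(x-y)\le CK(3|y|)$, and then
\begin{equation*}
B(x)\le C\int_{|y|>|x|/2}|y|^{-(2+4s)}K(3|y|)\,dy=
\left\{\begin{array}{ll}
O\!\left(|x|^{-(2+2s)}\right), & K(r)=r^{2s-1},\\[4pt]
O\!\left(|x|^{-(1+4s)}\log|x|\right), & K(r)=\log r,
\end{array}\right.
\end{equation*}
both of which are $o(K(x))$. (For (\ref{a3}) one should also note that the local singularity of $\log|x-y|$ near $y=x$ contributes only $O(|x|^{-(2+4s)})$, since $U^2$ is of that size there and $\log$ is locally integrable.) With these corrections your argument is complete and gives the same conclusion as the paper.
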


\begin{proof}
Since the proof of (\ref{a1})-(\ref{a3}) are the same, we only give the proof of the first one.
We divide the whole space into two parts,
\begin{align*}
\mathbb{R}=I_1\cup I_2:=\{y:|y|\leq|x|^{\frac23}\}\cup\{y:|y|>|x|^{\frac23}\}.
\end{align*}
Then, for any $y\in I_1,$ we have
\begin{align*}
\frac{1}{|x-y|^{2s+1}}=&\frac{1}{|x|^{2s+1}}\frac{1}{|1-\frac{y}{x}|^{2s+1}}
=\frac{1}{|x|^{2s+1}}\big(1+O(\frac{y}{x})\big),
\end{align*}
and
\begin{align*}
\int_{I_1}U(y)^2\mathrm{d}y=&\int_{\mathbb{R}^1}U(y)^2\mathrm{d}y+O(\int_{I_2}\frac{1}{|y|^{4s+2}}\mathrm{d}y)\nonumber\\
=&\mathfrak{c}_0+O(|x|^{-\frac{8s+2}{3}}),
\end{align*}
where we used $U(y)\rightarrow\frac{\mathfrak{b}_s}{|y|^{1+2s}}$ as $|y|\rightarrow\infty$ and $\mathfrak{c}_0=\int_{\mathbb{R}^1}U(y)^2\mathrm{d}y.$ Thus,
\begin{align}
\label{a4}
\int_{I_1}U^2(|y|)U(|x-y|)\mathrm{d}y=&U(x)\int_{I_1}U(y)^2+\int_{I_1}\big(U(x-y)-U(x)\big)U(y)^2\nonumber\\
=&\mathfrak{c}_0U(|x|)+o(1)(|x|^{-2s-1}).
\end{align}
For $y\in I_2,$ we have
\begin{equation*}
U(y)^2U(x-y)\leq C\frac{1}{|y|^{4s+2}}U(x-y).
\end{equation*}
Hence,
\begin{equation}
\label{a5}
\int_{I_2}U(y)^2U(x-y)=O(|x|^{-\frac{8s+4}{3}})=O(|x|^{-2s-1-\frac{2s+1}{3}}).
\end{equation}
Combining (\ref{a4}) and (\ref{a5}), we get (\ref{a1}).
\end{proof}

%

\vspace{1cm}

\end{document}